\newtheorem{thm}{Theorem}
\newtheorem{lem}{Lemma}
\newtheorem{prop}{Proposition}
\theoremstyle{remark}
\newcommand{\x}[2]{f_\omega^{#2}({#1})}  %before {X^{#1}_{#2}}
\newcommand{\xo}[2]{f_\omega^{#2}({#1})} %before {X^{#1}_{#2}(\omega)}
\newcommand{\mcP}{\mathcal P_{M, \alpha}}
\newcommand{\mbP}[1]{\mathbb{P}_{#1}}
\newcommand{\mbE}[1]{\mathbb{E}_{#1}}
\newcommand{\red}[1]{\textcolor{red}{#1}}
\newcommand{\vare}{\varepsilon}
\newcommand{\f}[3]{f_{{#1}_{#2}}\circ\ldots\circ f_{{#1}_1}({#3})}
\newcommand{\dxo}[1]{|f_\omega^{#1}(x)-f_\omega^{#1}(y)|}
\newcommand{\tim}[1]{T_{#1}^{h}(x)}
\newcommand{\del}[1]{\Delta^{x,h}_{#1}}
\title{Alsed\`a-Misiurewicz systems with place-dependent probabilities}
\author{Klaudiusz Czudek}
\address{Klaudiusz Czudek, Institute of Mathematics Polish Academy of Sciences,
\'Sniadeckich 8,  00-656 Warszawa, Poland}
\email{klaudiusz.czudek@gmail.com}
\subjclass[2000]{ Primary 37E05, 60G10, 60J05.}
\keywords{iterated function systems, place-dependent probabilities, Markov chains, Alsed\'a-Misiurewicz systems}
\thanks{The research was supported by the Polish Ministry of Science and Higher Education "Diamond Grant" 0090/DIA/2017/46.}
\begin{document}
\maketitle

\begin{abstract}
We consider systems of two specific piecewise linear homeomorphisms of the unit interval, so called the Alsed\`{a}-Misiurewicz systems, and investigate the basic properties of Markov chains which arise when these two transformations are applied randomly with probabilities depending on the point of the interval. Though this iterated function system is not contracting in average and known methods do not apply, stability and the strong law of large numbers are proven.
\end{abstract}

%......................................... INTRODUCTION

\section{Introduction}
\subsection{The main results}
Let $f_0$ be an interval homeomorphisms such that its graph consists of two straight lines, the first one connecting $(0,0)$ with some point $(x_0, y_0)\in (1/2,1)\times [1/2,1)$ under diagonal, and the second one connecting $(x_0, y_0)$ with $(1, 1)$. Next, let $f_1$ be the interval homeomorphisms defined by $f_1(x)=1-f_0(1-x)$, $x\in [0,1]$ (see Figure 1). Setting $a_0=\frac{y_0}{x_0}$ and $a_1=\frac{1-y_0}{1-x_0}$ we can write
$$
f_0(x) := \left\{ \begin{array}{ll}
a_0x & \textrm{if $x\le x_0$}\\
a_1(x-1)+1 & \textrm{if $x>x_0$}
\end{array} \right. \qquad  \textrm{and} \qquad f_1(x) :=1-f_0(1-x).$$
Fix two positive real functions $p_0, p_1$ on $[0,1]$ with $p_0(x)+p_1(x)=1$ for every $x\in [0,1]$. It defines the following natural random process on the interval $(0,1)$: being at any point $x\in (0,1)$ we choose transformation $f_i$ with probability $p_i(x)$ and move to the point $f_i(x)$, $i=0,1$. To be more strict, we define the family of Markov chains with common transition probabilities given by the formula $\pi(x,\cdot):=p_0(x)\delta_{f_0(x)}+p_1(x)\delta_{f_1(x)}$, $x\in(0,1)$.

As far as we know, stability of these Markov chains, i.e. arising from random application of transformations on the interval, were always proved under assumption that all transformations are contractions or are contracting in average. However, recently several papers have been published which established stability with dropping this assumptions. The first one and probably the most important for us was by Lluis Alsed\`a and Micha\l{} Misiurewicz \cite{Alseda-Misiurewicz} in 2014 where the authors showed that if we consider two transformations defined in the first paragraph for $y_0=1/2$ and choose it randomly with constant and equal probabilities $1/2, 1/2$ then the corresponding Markov chain is stable. After \cite{Baranski-Spiewak} we call the systems defined above the Alsed\`a-Misiurewicz systems (in \cite{Baranski-Spiewak} the only restriction for $(x_0,y_0)\in (0,1)\times (0,1)$ is that it should be under diagonal). Later their results were generalized to the case of two $C^2$ diffeomorphisms (\cite{Homburg}) or even arbitrary finite number of homeomorphisms (\cite{Szarek_Zdunik}) satisfying additional assumptions that from each point we have positive probabilities of moving to the left and moving to the right, all functions are differentiable at 0 and 1 and the average Lyapunov exponents at these points are positive. In our setting we define the average Lyapunov exponents at $0$ and $1$ by the formulae
\begin{equation}
\label{I1}
\begin{gathered}
\Lambda_0:=p_0(0)\log(a_0)+p_1(0)\log(a_1),\\
\Lambda_1:=p_0(1)\log(a_1)+p_1(1)\log(a_0).
\end{gathered}
\end{equation}
In the general case coefficients $a_0, a_1$ should be replaced by derivatives at $0$ and $1$, respectively. Note that all known results are proven under assumption that probability of choice of a transformation does not depend on the point of the interval.

The most important papers concerning systems with place-dependent probabilities are probably \cite{BDEG}, \cite{Lasota} where the stability of the corresponding Markov chains is proved under the most general assumptions in arbitrary locally compact metric spaces. However, one of them is contractivity in average which in our case is never satisfied, therefore we can neither apply the result, nor use the proof. In \cite{BDEG} and \cite{Stenflo} one can find further references and historical comments.

%The other important assumption is that the probabilities are Dini continuous. It seems that proving unique ergodicity of the Markov chain without this assumption is impossible. Indeed, it is shown in \cite{Stenflo} that even if we consider two affine contractions $S_0, S_1$ on the interval $[0,1]$, one can find two continuous probabilities $p_0, p_1$ depending on a point, such that the Markov chain is not uniquely ergodic.

The goal of our paper is to provide proofs of ergodicity, stability and the strong law of large numbers for Alsed\'a-Misiurewicz systems in the case when probabilities are not necessarily constant. To this end we introduce the following assumptions:
\begin{enumerate}[({A}1)]
\item $\frac 1 2<x_0<1$ and $\frac 1 2\le y_0<x_0$
\item $p_0, p_1$ are Dini continuous,
\item $0<p_i(x)<1$ for $x\in[0,1]$ and $i=0,1$,
\item $\Lambda_0, \Lambda_1>0$.
\end{enumerate}
The functions $p_0, p_1$ are Dini continuous, which means that for every $C\ge 0$ and $t<1$ we have $\sum_n \beta(Ct^n)<\infty$, where $\beta$ denotes the modulus of continuity of $p_0, p_1$, i.e.
$$\beta(t):=\max_{ i=0,1} \sup_{x\in (0,1), |h|\le t} |p_i(x)-p_i(x+h)|.$$ 
We do not need any further assumptions on contractiveness of the system. Our two main results are the following theorems.

\begin{thm}
If (A1)-(A4) hold then there exists a unique Borel probability measure $\mu_*\in \mathcal M$ such that the Markov chain $(X^{\mu_*}_{n})$ is stationary.
\end{thm}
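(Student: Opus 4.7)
The plan is to handle existence by a Foster-Lyapunov plus Krylov-Bogolyubov argument, and uniqueness by a coupling argument that exploits the Dini continuity of $p_0, p_1$. Observe first that both $f_0$ and $f_1$ fix the boundary $\{0,1\}$, so the Dirac masses $\delta_0$ and $\delta_1$ are automatically stationary; the class $\mathcal{M}$ must exclude these, and the content of the theorem is the existence and uniqueness of a genuinely interior invariant measure.

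For existence, I would use the Lyapunov function $V(x):=-\log x - \log(1-x)$ on $(0,1)$. Near the endpoints both maps are linear with slopes $a_0,a_1$, and using the definition of the average Lyapunov exponents in (1) a short Taylor computation gives $(\pi V)(x)-V(x) = -\Lambda_0 + o(1)$ as $x\to 0^+$ and $-\Lambda_1 + o(1)$ as $x\to 1^-$. By (A4) both drifts are strictly negative, so $V$ is a Foster-Lyapunov function off some compact $K\subset (0,1)$. Applying Krylov-Bogolyubov to the Cesàro means of $\pi^n(x_0,\cdot)$ for any interior $x_0$ then produces a stationary probability measure $\mu_*$ supported in a sublevel set of $V$, in particular in $\mathcal{M}$.

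For uniqueness the difficulty is that the IFS is not contracting on average, so classical tools do not apply. I would try to establish asymptotic stability of the transition operator on continuous observables: for $\phi \in C_b([0,1])$ and any $x,y \in (0,1)$, $P^n\phi(x)-P^n\phi(y)\to 0$. The natural mechanism is a coupling of two chains $(x_n),(y_n)$ driven by common randomness; both coordinates are contracted by the factor $a_0$ whenever they jointly lie in $[0,x_0]$ and $f_0$ is chosen, and symmetrically in $[1-x_0,1]$ with $f_1$. The cost of non-synchronous moves, forced by place-dependence, is bounded by the modulus $\beta(|x_n-y_n|)$, and Dini continuity makes the resulting error summable along a geometric-decay excursion. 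Combined with the Lyapunov drift from the previous step, which keeps the coupled pair in a compact set bounded away from $\{0,1\}$, a Borel-Cantelli or stopping-time analysis should force $|x_n-y_n|\to 0$ almost surely, yielding uniqueness among measures in $\mathcal{M}$.

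The main obstacle is precisely this coupling step. The one-step expected change of $|x_n-y_n|$ is in general positive, so one cannot simply iterate a one-step drift bound. Instead one must argue pathwise, isolating the excursions of the pair into the contracting boundary layers $[0,x_0]$ and $[1-x_0,1]$ and showing they occur and dominate the expansions frequently enough. Quantifying how place-dependence distorts synchronisation via $\beta$, and weaving this error control together with the Foster-Lyapunov drift, is the technical heart of the argument.
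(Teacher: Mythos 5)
Your existence argument is sound and is a genuine alternative to the paper's route: the paper proves invariance of the class $\mathcal P_{M,\alpha}=\{\mu:\mu((0,x)),\mu((1-x,1))\le Mx^\alpha\}$ under $P$ (equivalently, a Foster--Lyapunov estimate for $V(x)=x^{-\alpha}+(1-x)^{-\alpha}$ with a small $\alpha$ chosen via the Taylor expansion of $a^{-\alpha}$ and (A4)), and then applies Krylov--Bogoliubov exactly as you do. Your logarithmic Lyapunov function $V(x)=-\log x-\log(1-x)$ gives the same negative drift $-\Lambda_0+o(1)$, $-\Lambda_1+o(1)$ at the endpoints and yields tightness of the Ces\`aro means, hence an invariant probability on $(0,1)$. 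The only advantages of the paper's version are the quantitative polynomial tail bound $\mu_*((0,x))\le Mx^\alpha$ and the exponential excursion estimates of Lemma 3, which are reused later; for bare existence your argument suffices. (Minor slip: $\mu_*$ need not be supported in a sublevel set of $V$, only integrate $V$-tails uniformly.)

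The uniqueness half has two genuine gaps. First, you have mislocated where the coupled distance can grow. On the middle interval $[1-x_0,x_0]$ \emph{both} maps have slope $a_0<1$, so that region is where contraction happens; the danger is the outer branches of slope $a_1>1$ ($f_1$ on $[0,1-x_0)$, $f_0$ on $(x_0,1]$) and, worse, the configurations where $x_n$ and $y_n$ straddle a kink so that the same symbol acts with different slopes on the two points. Your excursion/Borel--Cantelli scheme cannot start without an a priori bound preventing $|x_n-y_n|$ from blowing up during these episodes; this is exactly the content of the paper's Proposition 1 (the uniform bound $|f^n_\omega(x)-f^n_\omega(y)|\le a_1|x-y|$), whose proof is a delicate case analysis at the kinks and is the only place assumption (A1) is used --- the paper explicitly notes the statement fails for general Alsed\`a--Misiurewicz parameters. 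Combined with exponential moment bounds for the excursion times out of $[\varepsilon,1-\varepsilon]$ (Lemma 3, which is where (A4) re-enters) and an accessibility lemma, this yields the quantitative estimate $\mathbb{E}_x|f^n_\omega(x)-f^n_\omega(y)|\le Lq^n|x-y|$ for $|x-y|<\eta_2$. Second, even granting that estimate, it is \emph{local}: it says nothing about two invariant measures whose supports you have not yet related. The paper closes this by exhibiting a single point $c$ (the attracting fixed point of $f_0\circ f_1$ on $[1-x_0,y_0]$) lying in the support of every invariant measure, proving equicontinuity of $(U^n\varphi)$ at $c$ via the $Lq^n$ bound together with the Dini telescoping of $|p_{i_1,\ldots,i_n}(x)-p_{i_1,\ldots,i_n}(y)|$, and deriving a contradiction from two distinct ergodic measures. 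Your sketch never produces such a common point, nor explains how the coupling handles pairs $x,y$ that are far apart, so as written it does not yield uniqueness.
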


\begin{thm}
\label{thm:C}
If (A1)-(A4) hold, $\nu$ is any Borel probability measure then the Markov chain $(X^{\nu}_n)$ is asymptotically stable.
\end{thm}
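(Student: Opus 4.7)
The plan is to reduce Theorem \ref{thm:C} to Theorem 1 by establishing two structural properties of the Markov operator $P\varphi(x) = p_0(x)\varphi(f_0(x)) + p_1(x)\varphi(f_1(x))$: first, the e-property of $P$, meaning that $\{P^n\varphi\}_n$ is equicontinuous at each $x$ for every bounded Lipschitz $\varphi$; and second, a concentration property guaranteeing that orbits of any two initial points come close with positive probability. Combined with the invariant measure $\mu_*$ from Theorem 1, these imply uniqueness of $\mu_*$ and weak convergence $P^{*n}\nu \to \mu_*$ for every Borel probability $\nu$, in the spirit of the criterion used in \cite{Szarek_Zdunik}.

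For the e-property, the relevant estimate is $|P^n\varphi(x) - P^n\varphi(y)|$ in terms of $|x-y|$. Expanding $P^n\varphi$ as an expectation over the random composition $\x{\cdot}{n}$ and telescoping the two trajectories, the error splits into two groups: differences of $\varphi$ along the coupled trajectories $\x{x}{k}$ and $\x{y}{k}$, and differences of the probability weights $p_{\omega_k}(\x{x}{k-1}) - p_{\omega_k}(\x{y}{k-1})$. The first group is controlled by showing that on orbits of large probability $|\x{x}{k}-\x{y}{k}|$ decays geometrically --- whenever the orbit spends time near $0$ or $1$, consecutive slopes multiply by a factor bounded by some $t<1$. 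The second group is where Dini continuity enters: the summability $\sum_k \beta(Ct^k) < \infty$ bounds the cumulative contribution of the probability differences and yields a uniform modulus of continuity for $P^n\varphi$.

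The concentration property is where assumption (A4) plays the central role. Near $0$, the random walk $\log\x{x}{n}$ is well approximated by an i.i.d.\ sum with positive mean $\Lambda_0$ (and similarly near $1$ with mean $\Lambda_1$), so trajectories drift away from the boundary into the interior. This yields, for every $\varepsilon>0$, a compact $K\subset(0,1)$ and $n_0$ with $\pi^{n_0}(x,K)\ge 1-\varepsilon$ uniformly in the starting point $x$. On $K$, a synchronous (or maximal) coupling of two trajectories contracts: multiplicative ergodic-type behaviour of products of slopes $a_{\omega_1},\ldots,a_{\omega_n}$ forces $|\x{x}{n}-\x{y}{n}|$ to shrink exponentially on a set of positive probability.

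The main obstacle is coupling two trajectories starting from different points $x\neq y$: because the probabilities are place-dependent, the two chains cannot simply share the same symbol $\omega_k$ at each step. The natural remedy is a maximal coupling of the one-step kernels $\pi(x,\cdot)$ and $\pi(y,\cdot)$, whose total-variation defect is bounded by $|p_0(x)-p_0(y)|\le \beta(|x-y|)$. Iterating this coupling and combining with the geometric shrinkage of $|\x{x}{k}-\x{y}{k}|$ on good orbits, the Dini summability $\sum_k \beta(Ct^k)<\infty$ yields a total coupling defect that is finite and tends to zero as $|x-y|\to 0$. Making this bookkeeping precise --- and ensuring via (A4) that ``good'' orbits have probability close to $1$ --- is the technical core, after which asymptotic stability follows in a standard way.
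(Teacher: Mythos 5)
Your overall architecture (e-property plus a concentration/lower-bound step) is the same as the paper's, and your e-property sketch --- same-symbol trajectories, geometric decay of $|\x{x}{k}-\x{y}{k}|$, Dini summability absorbing the differences of the probability weights --- is essentially the paper's Proposition \ref{prop:A4} together with the equicontinuity argument in the uniqueness proof. The gap is in the step that handles two trajectories started from \emph{far-apart} points $x\neq y$, which is exactly where the difficulty of the theorem sits. You assert that on a compact $K$ a synchronous or maximal coupling ``contracts'' and forces $|\x{x}{n}-\x{y}{n}|$ to shrink exponentially with positive probability; but the contraction estimates available here (Propositions \ref{prop:A3} and \ref{prop:A4}) only apply when $|x-y|$ is already below a fixed threshold $\eta_1$ or $\eta_2$ --- the maps $f_0,f_1$ are \emph{not} contractions on all of $(0,1)$ ($f_0$ expands on $[x_0,1]$, $f_1$ on $[0,1-x_0]$), and there is no multiplicative-ergodic mechanism that contracts two arbitrary points of $K$ under a shared symbol sequence. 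Likewise, your maximal-coupling bookkeeping has one-step total-variation defect $\beta(|x-y|)$, which is only small when $|x-y|$ is small; for fixed distinct $x,y$ the accumulated defect over the time needed to bring the points together is not controlled, so the argument is circular: it proves local equicontinuity again rather than $U^n\varphi(x)-U^n\varphi(y)\to 0$ for \emph{all} pairs.

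The paper closes this gap differently: it runs the two chains \emph{independently} under the product measure $\mbP{x}\otimes\mbP{y}$ (so no coupling of symbols is needed at all) and proves a uniform lower bound (Lemmas \ref{lem:B1} and \ref{lem:B3}) that each trajectory visits a fixed small neighbourhood $(c-\rho,c+\rho)$ of the attracting fixed point $c$ of $f_0\circ f_1$ at a common deterministic time with probability at least $\delta$, hence both do so simultaneously with probability at least $\delta^2$. Iterating, the event that this never happens in $k$ attempts has probability at most $(1-\delta^2)^k$, and on the complementary event one invokes equicontinuity of $(U^{n-m_j}\varphi)$ \emph{at the single point} $c$ to conclude $|U^n\varphi(x)-U^n\varphi(y)|\le\lambda+(1-\delta^2)^k\cdot O(\|\varphi\|_\infty)$. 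If you replace your coupling step by such a uniform ``both trajectories hit a fixed small interval around $c$'' estimate (which needs the drift argument from (A4) near the boundary, Lemma \ref{lem:A1}, plus the deterministic accessibility of $c$ via words in $f_0,f_1$), your plan goes through; as written, the contraction claim for far-apart points is the missing idea.
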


The last theorem was proved in the case of general Markov chains on compact spaces in \cite{Breiman}. Later it was proved in \cite{Diaconis} in the case of iterated systems  of contractions in $\mathbb{R}^n$ with constant probabilities and in \cite{Elton} in the case of systems contracting in average with place dependent probabilities on locally compact spaces. Our system does not satisfy assumptions of any of these theorems, however, using some ideas from the last paper we are still able to prove it. In its statement it is essential that it holds for every point $x\in(0,1)$, not only for $\mu_*$ almost every.

\begin{thm}[The Strong Law of Large Numbers]
If (A1)-(A4) hold, $x\in (0,1)$, $\varphi\in C\big((0,1)\big)$ then
$$\frac {\varphi(X^{x}_{1})+\ldots+\varphi(X^{x}_{n})}{n}\to \int\varphi\textrm{d$\mu_*$} \qquad \textrm{a.s.}$$
\end{thm}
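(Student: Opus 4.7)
The plan is to combine Birkhoff's pointwise ergodic theorem for the stationary chain with the asymptotic stability of Theorem~\ref{thm:C} and the positivity (A3), adapting to the non-contractive setting the strategy Elton used in~\cite{Elton} for the contracting case. Because the Lyapunov exponents in (A4) are positive, $\mu_*$ assigns no mass to $\{0,1\}$ and the laws $(X_n^x)_{n\ge 1}$ are tight on compacts of $(0,1)$; truncating $\varphi$ by bounded continuous cut-offs therefore reduces the problem to the case of $\varphi$ bounded continuous on $[0,1]$.

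The uniqueness asserted in Theorem~1 forces $\mu_*$ to be ergodic, so Birkhoff's theorem applied to the stationary process $(X_n^{\mu_*})$ gives
\begin{equation*}
\frac{1}{n}\sum_{k=1}^{n}\varphi\big(X_k^{\mu_*}\big)\longrightarrow\int\varphi\,d\mu_*\qquad \mathbb{P}_{\mu_*}\text{-a.s.}
\end{equation*}
Consequently, the function
\begin{equation*}
G(x)\,:=\,\mathbb{P}_x\!\left(\frac{1}{n}\sum_{k=1}^{n}\varphi(X_k^x)\longrightarrow\int\varphi\,d\mu_*\right)
\end{equation*}
equals $1$ for $\mu_*$-almost every $x$. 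Since the event inside the probability is invariant under the temporal shift, the Markov property yields $G=PG$, and because $0\le G\le 1$ and $p_0,p_1>0$, the identity $G(x)=p_0(x)G(f_0(x))+p_1(x)G(f_1(x))$ forces $G(f_0(x))=G(f_1(x))=1$ whenever $G(x)=1$. Hence the full-measure set $\{G=1\}$ is forward invariant under both $f_0$ and $f_1$.

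To upgrade $G=1$ from $\mu_*$-a.e.\ to every $x\in(0,1)$, I would apply the strong Markov property at the hitting time $\tau:=\inf\{n\ge 0:X_n^x\in\{G=1\}\}$. For each $\varepsilon>0$, inner regularity of $\mu_*$ combined with the asymptotic stability of Theorem~\ref{thm:C} should produce a $\mu_*$-continuity set $A$ contained (up to a $\mu_*$-null set) in $\{G=1\}$ with $\mu_*(A)>1-\varepsilon$; the Portmanteau theorem then gives $\mathbb{P}_x(X_n^x\in A)\to\mu_*(A)$, and the forward invariance forces $\mathbb{P}_x(\tau<\infty)\ge 1-\varepsilon$. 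Because the SLLN is a tail event, the strong Markov property delivers $G(x)\ge 1-\varepsilon$ for every $\varepsilon>0$, whence $G(x)=1$.

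The principal obstacle is precisely this last extension. Without contraction on average, the coupling devices used in~\cite{Elton} are unavailable, and the weak convergence supplied by Theorem~\ref{thm:C} must be brought to bear on a set, $\{G=1\}$, that is a priori only measurable, neither open, closed, nor a continuity set. Constructing a $\mu_*$-continuity set $A\subset\{G=1\}$ of mass close to one while simultaneously ensuring that $X_n^x$ puts negligible mass on the $\mu_*$-null exceptional part is the most delicate ingredient; this is where the regularity of $\mu_*$ that stems from the specific geometric structure of the Alsed\`a--Misiurewicz maps under (A1) becomes essential.
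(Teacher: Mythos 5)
Your opening steps are sound: the reduction to bounded $\varphi$, ergodicity of $\mu_*$ from uniqueness, Birkhoff giving $G=1$ for $\mu_*$-a.e.\ $x$, and the harmonicity argument showing that $\{G=1\}$ is forward invariant under both $f_0$ and $f_1$ (your ``$G=PG$'' should read $G=UG$, but that is cosmetic). The gap is exactly where you locate it, and it is not a technicality that more careful bookkeeping with continuity sets can repair: the laws $P^n\delta_x$ are purely atomic, supported on at most $2^n$ points, so for every $n$ they may be mutually singular with $\mu_*$. Weak convergence $P^n\delta_x\to\mu_*$ plus Portmanteau controls $P^n\delta_x(A)$ only for $\mu_*$-continuity sets $A$, while the inclusion $A\subseteq\{G=1\}$ you can arrange holds only modulo a $\mu_*$-null set $N$; nothing prevents $P^n\delta_x$ from concentrating all of its mass on $A\cap N$, so you cannot conclude that $\mathbb{P}_x(X_n^x\in\{G=1\})$ is large, hence cannot conclude $\mathbb{P}_x(\tau<\infty)\ge 1-\varepsilon$. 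Forward invariance of $\{G=1\}$ does not rescue this either, since a forward-invariant set of full $\mu_*$-measure need not be reachable from a given $x$. This ``a.e.\ versus everywhere'' obstruction is precisely the content of the theorem, so the proposal as it stands does not prove it.

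The paper closes the gap by a different mechanism, taken from Elton: absolute continuity on the code space $\Sigma$ rather than weak convergence on the interval. Proposition \ref{prop:A4} gives $\mathbb{E}_x|f^n_\omega(x)-f^n_\omega(y)|\le Lq^n|x-y|$, whence (Lemma \ref{lem:C2}) the two orbits driven by the \emph{same} code $\omega$ are eventually $r^n$-close almost surely; the Dini continuity (A2) then yields (Lemma \ref{lem:C3}) that for $y,z$ in a small interval around the point $c$ the measures $\mathbb{P}_y$ and $\mathbb{P}_z$ on $\Sigma$ are mutually absolutely continuous. Birkhoff supplies one point $y$ near $c$ whose averages converge $\mathbb{P}_y$-a.s.; absolute continuity transfers the full-measure set of good codes to $\mathbb{P}_z$ for every $z$ near $c$, and the deterministic closeness of the two orbits shows the averages along those codes have the same limit. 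Finally, Lemma \ref{lem:C1} (the chain a.s.\ hits $(c-\rho,c+\rho)$) together with the strong Markov property handles an arbitrary starting point. If you wish to keep your harmonic-function framework, you would still need an ingredient of comparable strength --- some equivalence of path measures, or a proof that $\{G=1\}$ contains a nonempty open set --- to pass from $\mu_*$-a.e.\ to everywhere.
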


\section{Notation}

%%%%%%%%%%%%%%%%%%%%%5    PICTURE
\begin{center}
%PRZESKALOWANIE sizebox{<skala>}{całe środowisko}

\begin{tikzpicture}
\draw (0,0) rectangle (6,6); %ramka
\draw (0,0) -- (6,6); %przekątna

%oznaczenia na osiach
\draw (4.5,0) node[below] {$x_0$};
\draw (1.5,0) node[below] {$1-x_0$};
\draw (6,3.3) node[right] {$y_0$};
\draw (0,2.7) node[left] {$1-y_0$};

%the first function
\draw[red] (0,0) --  (4.5, 3.3);
\draw[red] (4.5,3.3) -- (6,6);

%the second function
\draw[red] (0,0) -- (1.5, 2.7);
\draw[red] (1.5, 2.7) -- (6, 6);

%podpisy funkcji
\draw (3,1.9) node {$f_0$};
\draw (3,4.1) node {$f_1$};

% dashed lines
\draw[dashed] (1.5,0) -- (1.5,2.7);
\draw[dashed] (4.5, 0) -- (4.5,3.3);
\draw[dashed] (0,2.7) -- (1.5,2.7);
\draw[dashed] (4.5,3.3) -- (6,3.3);
\draw[dashed] (0,3) -- (6,3);

%zakreskowany obszar
\draw[pattern=north east lines, pattern color=blue!20!white] (3,3) -- (6,3) -- (6,6) -- cycle;

%opis
\node [below=0.8cm, align=flush center,text width=8cm] at (3,0)
{Figure 1. The example of Alsed\`{a}-Misiurewicz system. The hatched area is the set of points $(x,y)$ which satisfy assumption (A1).};

\end{tikzpicture}
\end{center}

The space of Borel probability measures on $(0,1)$ will be denoted by $\mathcal M_1$ and the space of all positive Borel measures by $\mathcal M$. Recall that the family of transition probabilities $p(x,\cdot )\in\mathcal M_1$, $x\in (0,1)$ by the formula
$$p(x,\cdot):= p_0(x)\delta_{f_0(x)}+p_1(x)\delta_{f_1(x)} \quad \textrm{for $x\in (0,1)$}.$$
Let us choose an initial distribution $\mu\in \mathcal M_1$. Together with the transition probabilities it defines the Markov chain $(X^{\mu}_n)$ on $(0,1)$. For simplicity of notation, we shall write $(X^x_n)$ when $\mu=\delta_x$. Let us stress that values of this Markov chain are in the open interval $(0,1)$, not $[0,1]$.

The canonical space for this Markov chain is constructed as follows. Put $\Omega=(0,1)^\infty$, $\mathcal G=\mathcal B (0,1)^\infty$. Here $\mathcal B (0,1)$ stands for the $\sigma$-algebra of Borel subsets of $(0,1)$. We define the family of measures $\mathbb{P}^\infty_x, x\in (0,1)$ on $(\Omega,\mathcal G)$ by giving its values on cylinder sets, i.e.
$$\mathbb{P}^\infty_x(A_1\times\ldots\times A_k\times (0,1)^\infty):=\int_{A_1} p(x,dx_1)\int_{A_2} p(x_1, dx_2)\ldots\int_{A_k}p(x_{k-1},dx_k),$$
where $A_1,\ldots, A_k\in \mathcal B(0,1)$, $x\in (0,1)$. Existence of the unique extension to a measure on $\mathcal G$ follows from the Kolmogorov Extension Theorem. Fix the initial distribution $\nu\in \mathcal M_1$ and define the measures $\mathbb{P}^\infty_\nu$ on cylinders by
$$\mathbb{P}^\infty_\nu (A\times B):=\int_{A} \mathbb{P}^\infty_x(B)\nu(dx),$$
for $A\in\mathcal B(0,1), B\in\mathcal G$. This measure the unique extension to $\mathcal G$ by the Kolmogorov Extension Theorem. Now the sequence $(\pi_n)$ of projections defined on $(\Omega, \mathcal{G}, \mathbb{P}_\nu^\infty)$ by $\pi_n(x_1, x_2, \ldots):=x_n$, $n\ge 1$, is the canonical realization of the Markov chain $(X^\nu_n) $.

The processes $(X^x_n), x\in (0,1)$ may be also realized on the space $\Sigma=\{0,1\}^\mathbb{N}$ with the standard product $\sigma$-algebra $\mathcal F$ and the probability measure $\mathbb{P}_x$ defined on cylinders $C_{i_1,\ldots,i_k}=\{\omega\in\Sigma: \omega_1=i_1,\ldots,\omega_k=i_k\}$ by
$$\mathbb{P}_x(C_{i_1,\ldots,i_k}):=p_{i_1}(x)p_{i_2}(f_{i_1}(x))\ldots p_{i_n}(f_{i_{n-1}}\circ\ldots\circ f_{i_1}(x)).$$
Then it is clear that $f_\omega^n(x):=f_{\omega_{n}}\circ\ldots\circ f_{\omega_1}(x)$, where $\omega=(\omega_1, \omega_2,\ldots)$ is a realization of $(X_n^x)$. Expectation with respect to $\mathbb{P}_x$ is denoted by $\mathbb{E}_x$. By $\theta_n$ we denote the shift $\theta_n:\Sigma\rightarrow\Sigma$, $\theta_n(\omega):=(\omega_{n+1}, \omega_{n+2},\ldots)$, where $\omega=(\omega_1, \omega_2,\ldots)$. For $n\ge 1$ and $\omega\in\Sigma$ put
$$a^n_\omega:=a_{\omega_n}\ldots a_{\omega_1}.$$

%\red{ For brevity of notation, we will assume sometimes the process $(X^x_n)$ to be defined on the space $(\Sigma, \mathcal F, \mathbb P_x), x\in (0,1)$ when no confusion can arise.}

In order to describe the evolution of $(X^\nu_n)$ we introduce the Markov-Feller operator $P:\mathcal M\rightarrow \mathcal M$ by
$$P\mu(A):=\int_{f_0^{-1}(A)}p_0(x)\mu(dx)+\int_{f_1^{-1}(A)}p_1(x)\mu(dx),$$
for $A\in\mathcal B(0,1), \mu\in\mathcal M$. Its predual operator $U: C(0,1)\rightarrow C(0,1)$ is given by
$$U\varphi(x):=p_0(x)\varphi(f_0(x))+p_1(x)\varphi(f_1(x)),$$
for $\varphi\in C(0,1)$ and $x\in (0,1)$. By "predual" we mean that
$$\int_{(0,1)}\varphi dP\mu=\int_{(0,1)}U\varphi d\mu$$
for every $\mu\in\mathcal M$ and $\varphi\in C(0,1)$. The operator $P$ is linear, i.e. $P(\lambda_1\mu_1+\lambda_2\mu_2)=\lambda_1P\mu_1+\lambda_2P\mu_2$ for $\lambda_1,\lambda_2\ge 0$, $\mu_1,\mu_2\in \mathcal M$. It also preserves the total mass of a measure, i.e. $P\mu\big((0,1)\big)=\mu\big((0,1))\big)$ for $\mu\in\mathcal M$. We say that a measure $\mu_*\in\mathcal M$ is invariant for the operator $P$ if $P\mu_*=\mu_*$. In that case we say that the operator $P$ is asymptotically stable if $P^n\nu\to \mu_*$ weakly for every $\nu\in\mathcal M_1$.

The Markov-Feller operator $P$ has the property that the distribution of $X^\mu_n$ is $P^n\mu$ for all $n\ge 0$ and $\mu\in\mathcal M$. Therefore one can choose an initial distribution $\mu\in \mathcal M_1$ in such a way that $(X^\mu_n)$ is stationary if and only if $\mu$ is $P$-invariant and the Markov chain $(X_n^\mu)$ is stable if and only if $P$ is asymptotically stable.

Following \cite{Homburg} we define
$$\mathcal P_{M, \alpha}:=\{ \mu\in \mathcal M_1 : \mu((0,x))\le Mx^\alpha \ \textrm{and} \ \mu((1-x,1))\le Mx^\alpha \ \textrm{for all $x\in(0,1)$}  \}.$$
By what we just mentioned, the theorem below is equivalent to the existence of a unique invariant probability measure for the Markov-Feller operator $P$.

%
%...............................................                   THEOREM A
%

\section{The proof of Theorem 1}

\begin{proof}[Proof of existence]
The proof follows the lines of the proof from \cite{Homburg} with necessary changes. Namely, we shall show that there exist parameters $M\ge 1, \alpha\in (0,1)$ such that the class $\mathcal P_{M, \alpha}$ is invariant under the operator $P$. It is sufficient since in that case one can apply the standard Krylov-Bogoliubov technique, i.e. take any $\nu\in\mcP$ and define $\nu_n=\frac 1 n(\nu+\ldots+P^{n-1}\nu)$. By the $P$-invariance of $\mcP$, all $\nu_n$'s are in $\mcP$, and by weak-$\ast$ compactness of $\mcP$ there exists an accumulation point $\mu_*\in\mcP$ of this sequence which is an invariant measure. Details are left to the reader. What remains to show is the existence of parameters $M,\alpha$ with the desired property.

By the continuity of $p_0, p_1$ at the boundary, (A4) and (\ref{I1}) one can find $0<\varepsilon<1-x_0$ such that
\begin{equation}
\label{A1}
\begin{gathered}
\max_{t\le \varepsilon} p_0(t)\log a_0+\max_{t\le \varepsilon} p_1(t)\log a_1>\frac{\Lambda_0}{2},\\
\max_{t\le \varepsilon} p_0(1-t)\log a_1+\max_{t\le \varepsilon} p_1(1-t)\log a_0>\frac{\Lambda_1}{2}.
\end{gathered}
\end{equation}
Writing the Taylor formula of the function $\alpha\longmapsto a^{-\alpha}$ at 0 we obtain $a^{-\alpha}=1-\alpha\log a+o(\alpha)$, where $a$ is any fixed positive number. By this formula and (\ref{A1}) one can find  $\alpha\in(0,1)$ and $p\in(0,1)$ with
\begin{equation}
\label{A2}
\begin{gathered}
\max_{t\le \varepsilon} p_0(t) a_0^{-\alpha}+\max_{t\le \varepsilon} p_1(t)a_1^{-\alpha}<p,\\
\max_{t\le \varepsilon} p_0(1-t)a_1^{-\alpha}+\max_{t\le \varepsilon} p_1(1-t)a_0^{-\alpha}<p.
\end{gathered}
\end{equation}
Eventually, put $M$ to be any number greater or equal than $(a_0\varepsilon)^{-\alpha}>\varepsilon^{-\alpha}>1$.

We are in position to show the invariance of $\mcP$ for $M, \alpha$ chosen above. Take $\mu\in \mcP$ and $x\in (0,1)$. If $x\ge a_0\varepsilon$, then $Mx^\alpha\ge M(a_0\varepsilon)^\alpha\ge 1$, hence the condition $P\mu((0,x))\le Mx^\alpha$ is trivially satisfied. If $x<a_0\varepsilon$, then also $x<1-x_0$ and
$$P\mu((0,x))=\int_{(0,a_0^{-1}x]}p_0(t)\mu(dt)+\int_{(0,a_1^{-1}x]}p_1(t)\mu(dt)$$
$$\le \max_{t\le \varepsilon} p_0(t) M a_0^{-\alpha}x^\alpha+\max_{t\le \varepsilon} p_1(t) M a_1^{-\alpha}x^\alpha< Mx^\alpha p <Mx^\alpha,$$

\noindent where in the last line we used (\ref{A2}). Therefore $P\mu((0,x))\le Mx^\alpha$. The proof that $P\mu((1-x,1))\le Mx^\alpha$ is analogous. The invariance of $\mcP$ is established.
\end{proof}

%.................................................          LEMMA 1 (A1)

%......................................................             \eta

\begin{center}
%PRZESKALOWANIE sizebox{<skala>}{całe środowisko}

\begin{tikzpicture}

\draw (0,0) -- (6,0); %przekątna

%oznaczenia na osiach
\draw (4.5,0) node[below] {$x_0$};
\draw (1.5,0) node[below] {$1-x_0$};
\draw (3.6,0) node[above] {$y_0$};
\draw (4.3,0) node[above] {$f_1(y_0)$};
\draw (2.4,0) node[above] {$1-y_0$};
\draw (0,0) node[below] {$0$};
\draw (6,0) node[below] {$1$};
\draw (3,0) node[below] {$1/2$};

\foreach \Point in {(0,0), (4.5,0), (1.5,0), (3.6,0), (2.4,0), (6,0), (4.3,0), (3,0)}{
    \node at \Point {\textbullet};
    }

%opis
\node [below=0.8cm, align=flush center,text width=8cm] at (3,0)
{Figure 2. The order of the points $1-x_0$, $1-y_0$, $y_0$, $x_0$. };

\end{tikzpicture}
\end{center}

\noindent
We are now going to make some use of (A1). Take $\eta_1$ such that the following condition is satisfied
\begin{equation}
\label{A3}
a_0 y-a_1(y-a_1\eta_1)<0 \ \textrm{for $y\ge 1-y_0$}.
\end{equation}
There exists such $\eta_1$. Indeed, since $a_0<1<a_1$, the linear function $y\longmapsto a_0y-a_1(y-a_1\eta_1)$ is decreasing and, in consequence, it suffices to show that there exists such $\eta_1$ for $y=1-y_0$. But since
$$a_0(1-y_0)-a_1((1-y_0)-a_1\eta_1)= (1-y_0)(a_0-a_1)+a_1^2\eta_1,$$
it just follows by $(1-y_0)(a_0-a_1)<0$. Let us also assume that $\eta_1$ is less than the  length of the interval $[1-x_0, 1-y_0]$ and satisfies
\begin{equation}
\label{A4}
f_1(y_0+a_1\eta_1)<x_0.
\end{equation}
This is possible by the continuity of $f_1$ and
\begin{equation}
\label{A5}
f_1(y_0)<x_0.
\end{equation}
To show this, however, we compute $f_1(y_0)=\frac{-y_0(1-y_0)}{x_0}+1$ and obtain that (\ref{A5}) is equivalent to the condition $y_0(1-y_0)>x_0(1-x_0)$. By the assumptions made on $x_0, y_0$ we have $1/2\le y_0<x_0$, so our statement follows from the monotonicity of the function $\psi(t):=t(1-t)$ on $[1/2, 1]$.

%%%%%%%%%%%%%%%%%%%%                                 LEMMA 2 (A3)

\begin{prop}
\label{prop:A3}
If $x, y\in [1-x_0, x_0]$ and $|x-y|<\eta_1$, then
$$|\xo{x}{n}-\xo{y}{n}|\le a_1|x-y|$$
for every $n$.
\end{prop}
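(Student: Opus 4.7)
The plan is to proceed by induction on $n$, writing $x_n = f_\omega^n(x)$, $y_n = f_\omega^n(y)$, $d_n = |x_n - y_n|$, and (WLOG) $x_n \le y_n$. The base case $n = 0$ is immediate since $a_1 > 1$. The central geometric observation I would exploit is that on the middle interval $[1-x_0, x_0]$ both $f_0$ and $f_1$ are affine with the common slope $a_0 < 1$: $f_0$ is on its left branch there since $[1-x_0, x_0] \subseteq [0, x_0]$, and $f_1$ is on its right branch since $[1-x_0, x_0] \subseteq [1-x_0, 1]$. Consequently, while both iterates remain in $[1-x_0, x_0]$, each step contracts the distance by $a_0$, so in particular $d_1 \le a_0 d_0$.

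For the inductive step I would case-split on the positions of $(x_n, y_n)$ relative to the two breakpoints $1-x_0$ and $x_0$. The trivial case is when both iterates sit in $[1-x_0, x_0]$. The delicate cases are: (a) both iterates lie in one ``expanding'' subinterval, namely $[0, 1-x_0]$ (when $\omega_{n+1}=1$) or $[x_0, 1]$ (when $\omega_{n+1}=0$), in which the distance gets multiplied by $a_1$; and (b) the iterates straddle a breakpoint, giving a mixed-slope computation. For case (b) the identity $f_1(y_n) - f_1(x_n) = a_0(y_n - (1-x_0)) + a_1((1-x_0) - x_n)$ (with its symmetric partner for $f_0$ at $x_0$) yields the raw Lipschitz bound $\le a_1 d_n$, and assumptions (A4) and (A5) keep the image trapped inside $[0, x_0]$, preventing the iterate from escaping to a configuration that would admit a further $a_1$-expansion on the same side.

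The main obstacle is case (a), because the naive hypothesis $d_n \le a_1 d_0$ is too weak: two consecutive $a_1$-expansions would give $d_{n+1} = a_1^2 d_0$. The remedy I would pursue is to strengthen the inductive hypothesis so it records position in addition to distance: whenever both iterates occupy an expanding subinterval one additionally demands $d_n \le d_0$, so that a single $a_1$-expansion still yields $d_{n+1} \le a_1 d_0$. Condition (A3), which rearranges to $\eta_1 < (1-x_0)(1 - a_0/a_1)$, should supply exactly the quantitative margin needed to close this bookkeeping: any maximal run of consecutive $a_1$-expansions inside an expanding subinterval is forced to be short relative to the preceding contracting steps (because each $a_1$-expansion pushes the upper iterate toward $1-x_0$, or symmetrically the lower toward $x_0$), so the running product of slopes stays $\le a_1$. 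Verifying that the strengthened hypothesis is preserved in every case is then a matter of careful case-by-case checking.
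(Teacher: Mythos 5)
Your skeleton is right --- contraction by $a_0$ on the middle interval, expansion by $a_1$ on the outer pieces, and the realization that the naive hypothesis $d_n\le a_1d_0$ cannot survive consecutive expansions --- and your positional strengthening is close in spirit to the paper's actual engine. That engine is the observation that during an excursion in which every applied branch is linear through the origin one has $f_\omega^n(y)=a_\omega^n y$, so the cumulative slope product is read off from the position, $a_\omega^n=f_\omega^n(y)/y$, and is bounded by $(1-y_0)/(1-x_0)=a_1$ no matter how the run of expansions is structured. This is what controls case (a); assumption (A3) plays no role there, so attributing the ``short runs'' mechanism to (A3) is a misdiagnosis, and the preservation of your strengthened hypothesis still needs this multiplicative identity to be written down.

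The genuine gap is case (b). At a crossing, say $f_\omega^n(x)<1-x_0<f_\omega^n(y)$ with $\omega_{n+1}=1$, the raw bound $d_{n+1}\le a_1d_n$ closes nothing: one application of $f_0$ from the middle followed by such a crossing already gives the estimate $d_2\le a_1a_0d_0$ with $a_1a_0=\frac{y_0(1-y_0)}{x_0(1-x_0)}>1$, and your estimates do not rule out this recurring, so they yield only $(a_1a_0)^kd_0\to\infty$. The paper instead proves that $k\mapsto|f_1(ky)-f_1(kx)|$ is nonincreasing on the range of $k$ with $1-x_0\in[kx,ky]$ --- its slope is $a_0y-a_1x$, and \emph{this} is where (A3) together with $|x-y|<a_1\eta_1$ is used to get negativity --- and then evaluates at the left endpoint $k_0=(1-x_0)/y$, where $a_1k_0=(1-y_0)/y\le 1$ since $y\ge 1-y_0$. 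The conclusion is that the post-crossing distance is at most the distance at the \emph{start} of the excursion, not merely $a_1$ times the pre-crossing distance; without this the factors compound and the induction fails. A smaller omission: the $x$-iterate can dip below $1-x_0$ at some times while the $y$-iterate exceeds $x_0$ at others, and the paper needs an explicit interleaving of two one-sided lemmas (the stopping times $t_k,u_k$, one lemma yielding factor $1$ and the other factor $a_1$) to handle this alternation; your single induction does not address it.
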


In order to simplify the reasoning we assume that $x<y$ and $\omega$ such that $f^n_{\omega}(x)$ visits $(0,1-x_0)$ infinitely often and $f^n_{\omega}(y)$ visits $(x_0 ,1)$ infinitely often. In the end of the proof we will give a simple explanation that this assumption may be dropped.

\begin{lem}
\label{lem:A5}
If $1-x_0\le x<y\le x_0$, $|x-y|<a_1\eta_1$, $y>1-y_0$ and $u$ is such that $\xo{y}{n}\le x_0$ for all $n\le u$, then
$$|\xo{x}{n}-\xo{y}{n}|\le |x-y|$$
for all $n\le u$.
\end{lem}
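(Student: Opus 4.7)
The plan is to argue by induction on $n$. Monotonicity of $f_0,f_1$ preserves the ordering $\xo{x}{n}<\xo{y}{n}$, so write $d_n := \xo{y}{n}-\xo{x}{n}\ge 0$; the base $n=0$ is trivial. Assume $d_k\le d_0$ for all $k\le n-1<u$ and analyze step $n$ by cases on $\omega_n$ and the positions of $\xo{x}{n-1},\xo{y}{n-1}$ relative to the break points.

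If $\omega_n=0$, the hypothesis $\xo{y}{n-1}\le x_0$ puts both points in the contracting branch $[0,x_0]$ of $f_0$, giving $d_n=a_0 d_{n-1}\le d_0$. If $\omega_n=1$ and $\xo{x}{n-1}\ge 1-x_0$, both points lie in the contracting branch $[1-x_0,1]$ of $f_1$, again giving $d_n\le d_0$. The remaining cases are $\omega_n=1$ with $\xo{x}{n-1}<1-x_0$: the mixed case $\xo{y}{n-1}\ge 1-x_0$ yielding $d_n = a_0(\xo{y}{n-1}-(1-x_0)) + a_1((1-x_0)-\xo{x}{n-1})$, and the pure expansion case $\xo{y}{n-1}<1-x_0$ yielding $d_n=a_1 d_{n-1}$.

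In the mixed case, I would combine the bound $d_{n-1}<a_1\eta_1$ with (\ref{A3}), rewritten as $a_1(y-a_1\eta_1) > a_0 y$ for $y\ge 1-y_0$, together with an auxiliary lower bound $\xo{y}{n-1}\ge 1-y_0$ maintained inductively (using (\ref{A4}) so each $f_1$-rebound preserves $\xo{y}{n}\le x_0$) to conclude $d_n\le d_0$. In the pure expansion case the weak hypothesis $d_{n-1}\le d_0$ is insufficient, so I would strengthen the induction to include: whenever $\xo{y}{k}<1-x_0$, one in fact has $d_k\le d_0/a_1$. The key structural fact is (\ref{A5}) (equivalently $a_0(1-y_0)>1-x_0$), which prevents $y$ from descending from $[1-y_0,x_0]$ into $[0,1-x_0)$ in a single $f_0$-step; thus each excursion of $y$ into $[0,1-x_0)$ is preceded by enough $a_0$-contractions to absorb the forthcoming $a_1$-expansion under $f_1$.

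The main obstacle is the bookkeeping in the pure expansion case: for every admissible word in $f_0,f_1$ with $\xo{y}{k}\le x_0$, the accumulated multiplier of the distance must stay $\le 1$. This reduces to the combinatorial fact that the minimum number $N$ of $f_0$-steps needed to drop $y$ from $[1-y_0,x_0]$ into $[0,1-x_0)$ satisfies $a_0^N < 1/a_1$ (by (\ref{A5})), so $a_0^N a_1 \le 1$; this propagates to words with several consecutive $f_1$-expansions inside $[0,1-x_0)$ via the constraint that $\xo{y}{k}$ remain below $1-x_0$ throughout the excursion, which bounds the number of allowed $f_1$'s in terms of $N$.
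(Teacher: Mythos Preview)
Your step-by-step induction has real gaps in both hard cases. In the mixed case ($\omega_n=1$, $f_\omega^{n-1}(x)<1-x_0\le f_\omega^{n-1}(y)$), the auxiliary bound $f_\omega^{n-1}(y)\ge 1-y_0$ cannot be maintained: the first time the mixed case arises one has $f_\omega^{n-1}(y)\in(1-x_0,\,1-y_0]$, hence $\le 1-y_0$, not $\ge$. Condition~(\ref{A3}) is stated for arguments $\ge 1-y_0$ and is applied in the paper to the \emph{original} $y>1-y_0$, not to the current $f_\omega^{n-1}(y)$. In the pure expansion case your strengthened hypothesis ``$d_k\le d_0/a_1$ whenever $f_\omega^k(y)<1-x_0$'' does not propagate through a single step: one $f_1$-step gives $d_{k+1}=a_1 d_k\le d_0$, but if $f_\omega^{k+1}(y)$ is still below $1-x_0$ you would again need $d_{k+1}\le d_0/a_1$, which does not follow from $d_k\le d_0/a_1$ alone. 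The word-counting sketch in your last paragraph does not close this gap.

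The missing idea is a multiplicative reduction. After restarting at the last time $s$ with $f_\omega^s(y)>1-y_0$ (so one may take $s=0$), let $r$ be the first return of $f_\omega^n(y)$ above $1-y_0$. For $1\le n\le r-1$ one checks that whenever $\omega_n=1$ necessarily $f_\omega^{n-1}(y)\le 1-x_0$ (otherwise $f_\omega^n(y)>f_1(1-x_0)=1-y_0$, contradicting $n<r$). Hence on this whole stretch \emph{both} orbits obey $f_\omega^n(x)=a_\omega^n x$ and $f_\omega^n(y)=a_\omega^n y$, so $d_n=a_\omega^n d_0$ with $a_\omega^n=f_\omega^n(y)/y\le(1-y_0)/y<1$. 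At $n=r$ the mixed case is then handled by writing $f_\omega^{r-1}(x)=kx$, $f_\omega^{r-1}(y)=ky$ for the \emph{same} $k=a_\omega^{r-1}$ and studying $k\mapsto f_1(ky)-f_1(kx)$: its slope $a_0y-a_1x$ is negative by (\ref{A3}) applied to the initial $y$, and its value at $k_0=(1-x_0)/y$ equals $a_1k_0(y-x)\le y-x$ since $a_1k_0 y=1-y_0\le y$. This coupling of the cumulative multiplier $a_\omega^n$ to the position of $y$ relative to its starting value is precisely what a pure $d_n$-induction loses.
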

\begin{proof}[Proof of Lemma 1]
Let $t$ be the least integer for which $f^t_\omega(x)<1-x_0$ and let $s<t$ be the maximal integer for which $f^s_\omega(y)>1-y_0$. Obviously $|f^n_\omega(x)-f^n_\omega(y)|\le |x-y|$ for $n\le s$, since both $f_0, f_1$ are contractions on $[1-x_0, x_0]$. Moreover, $f^s_\omega(x)$, $f^s_\omega(y)$ again satisfy assumptions of the lemma, therefore we may assume $s=0$. Next, define $r$ to be the moment of the first visit of $f^r_\omega(y)$ in $(1-y_0, 1)$. If we will show the claim for $n\le r$, then the points $f^r_\omega(x)$, $f^r_\omega(y)$ again satisfy the assumptions of the lemma, therefore we may assume $r=u$.

For this purpose observe that $\xo{y}{n}=a^n_\omega y$ and $\xo{x}{n}=a^n_\omega x$ for $n\le r-1$, i.e. application of $f_0$ and $f_1$ is actually a multiplication by $a_0, a_1$, respectively. Indeed, assume contrary to our claim that  $f^{n-1}_\omega(y)>1-x_0$ and $\omega_n=1$. Then $f^n_\omega(y)=f_1(f^{n-1}_\omega(y))>f_1(1-x_0)=1-y_0$, hence $r=n$, which is a contradiction. Since $\xo{y}{n}=a^n_\omega y$ and $\xo{x}{n}=a^n_\omega x$ for $n\le r-1$, we have for these $n$'s
\begin{equation}
\label{A7}
|\xo{x}{n}-\xo{y}{n}|=a^n_\omega |x-y|
\end{equation}
But since $f^n_\omega(y)\le 1-y_0<y$ for $n\le r-1$, we have $a_\omega^n <1$ which completes the proof in the case $n\le r-1$.

The only point remaining now is to show that $|\xo{x}{r}-\xo{y}{r}|\le |x-y|$. %It is readily seen that $|f_\omega^{r_2}(x)-f_\omega^{r_2}(y)| = |f_1(\xo{x}{r_2-1})-f_1(\xo{y}{r_2-1})|$, where $\xo{y}{r_2-1}\ge 1-x_0$ (otherwise $f_1(\xo{y}{r_2-1})<f_1(1-x_0)=1-y_0$).
If $\xo{x}{r-1}\ge 1-x_0$, then the statement is obviously true, since both $f_0, f_1$ are contractions on $[1-x_0, x_0]$ and the statement is true for $n=r-1$. We are reduced now to proving $|\xo{x}{r}-\xo{y}{r}|\le |x-y|$ provided that $\xo{x}{r-1}<1-x_0<\xo{y}{r-1}$. Let us consider the function $k\longmapsto |f_1(ky)-f_1(kx)|$ for $k\in \big[\frac{1-x_0}{y}, \frac{1-x_0}{x}\big]$ (this condition is equivalent to say that $1-x_0\in \big[kx, ky\big]$, thus the condition $a_\omega^{r-1}\in \big[\frac{1-x_0}{y}, \frac{1-x_0}{x}\big]$ is equivalent to our case now). We assert that this function is nonincreasing. Indeed,
$$f_1(ky)-f_1(kx)=\big(f_1(ky)-f_1(1-x_0)\big)+\big(f_1(1-x_0)-f_1(kx)\big)$$
$$=a_0\big(ky-(1-x_0)\big)+a_1\big((1-x_0)-kx\big),$$
hence the function is linear with the slope equal to $a_0y-a_1x$ which is negative since $|x-y|<a_1\eta_1$ and (\ref{A3}) holds for $\eta_1$.

We compute now $|f_1(ky)-f_1(kx)|$ for $k=k_0:=\frac{1-x_0}{y}$. We have $|f_1(k_0y)-f_1(k_0x)|=a_1(k_0y-k_0x)=a_1k_0(y-x)$ and $a_1k_0y=f_1(k_0y)= f_1(1-x_0)=1-y_0 \le y$ which implies that $a_1k_0\le 1$. Combining that with the monotonicity of the considered function yields
$$|\xo{x}{r}-\xo{y}{r}|=|f_1(a_\omega^{r-1}y)-f_1(a_\omega^{r-1}x)|\le |f_1(k_0y)-f_1(k_0x)|\le |x-y|$$
which completes the proof of Lemma 1.
\end{proof}

% Moze zaznaczyc przed dowodem kilka wlasnosci, np. f1(1-x0)=1-y0,
\begin{lem}
\label{lem:A6}
If $x, y\in [1-x_0, x_0]$, $|x-y|<\eta_1$, and $u$ is such that $\xo{y}{n}\le x_0$ for all $n\le u$, then $|\xo{x}{n}-\xo{y}{n}|\le a_1|x-y|$.
\end{lem}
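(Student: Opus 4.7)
The plan is to reduce Lemma~\ref{lem:A6} to Lemma~\ref{lem:A5} by splitting on the position of $y$ relative to $1-y_0$. If $y>1-y_0$, then since $|x-y|<\eta_1<a_1\eta_1$, Lemma~\ref{lem:A5} applies directly and yields the even stronger bound $|x-y|\le a_1|x-y|$. The substance is the opposite case $y\le 1-y_0$, and for that case I would introduce the stopping time
$$\tau=\min\{n\le u:f_\omega^n(y)>1-y_0\},$$
with $\tau=u+1$ if no such $n$ exists.

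For $n<\tau$, I would first show by induction that $f_\omega^n(y)=a_\omega^n y$ and $f_\omega^n(x)=a_\omega^n x$, i.e.\ both orbits evolve purely multiplicatively. The argument mirrors the one inside Lemma~\ref{lem:A5}: if $\omega_{n+1}=1$ and $a_\omega^n y>1-x_0$, the formula $f_1(z)=a_0 z+(1-a_0)$ on $[1-x_0,x_0]$ forces $f_\omega^{n+1}(y)>f_1(1-x_0)=1-y_0$, contradicting $n+1\le\tau-1$. Hence along the $y$-orbit $f_1$ always acts by multiplication by $a_1$, and the same holds for $x$ because $a_\omega^n x<a_\omega^n y\le 1-x_0$. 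Combining $a_\omega^n y=f_\omega^n(y)\le 1-y_0$ with $y\ge 1-x_0$ yields $a_\omega^n\le (1-y_0)/(1-x_0)=a_1$, so $|f_\omega^n(x)-f_\omega^n(y)|=a_\omega^n|x-y|\le a_1|x-y|$ for every $n<\tau$.

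If $\tau\le u$, I would verify that $\omega_\tau=1$ (otherwise $f_\omega^\tau(y)\le a_0(1-y_0)<1-y_0$) and $a_\omega^{\tau-1}y\in(1-x_0,1-y_0]$ (otherwise $f_1$ cannot push the orbit past $1-y_0$). Writing $k=a_\omega^{\tau-1}$, I would rerun the closing argument of Lemma~\ref{lem:A5} on the function $k\mapsto f_1(ky)-f_1(kx)$. In the sub-case $kx\le 1-x_0\le ky$ the slope is $a_0 y-a_1 x$; when this is negative, the function is maximized at $k=(1-x_0)/y$, giving value $(1-y_0)(y-x)/y\le a_1(y-x)$ by $y\ge 1-x_0$. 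In the sub-case $kx>1-x_0$ the difference is $a_0 k(y-x)\le a_0 a_1(y-x)\le a_1(y-x)$. A short arithmetic check using $x\ge 1-x_0$ and $y\le 1-y_0$ also gives $f_\omega^\tau(x)\ge 1-x_0$ in both sub-cases, so $(f_\omega^\tau(x),f_\omega^\tau(y))$ meets every hypothesis of Lemma~\ref{lem:A5}. Applying that lemma then propagates the bound $|f_\omega^\tau(x)-f_\omega^\tau(y)|\le a_1|x-y|$ to all $n$ with $\tau\le n\le u$.

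The main obstacle I expect is the negativity of the slope $a_0 y-a_1 x$ at time $\tau$: unlike in Lemma~\ref{lem:A5}, the original $y$ need not exceed $1-y_0$, so (\ref{A3}) is not available verbatim. However, the $\eta_1$ bound forced by (\ref{A3}) can be rewritten as $a_1\eta_1<(a_1-a_0)(1-x_0)$, and combined with $y\ge 1-x_0$ and $|x-y|<\eta_1$ this yields $a_0 y-a_1 x<(a_0-a_1)y+a_1\eta_1<0$, which is exactly what is needed under the weaker hypothesis of Lemma~\ref{lem:A6}.
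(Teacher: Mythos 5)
Your proposal is correct and follows essentially the same route as the paper's proof: stop at the first time $\tau$ at which the $y$-orbit crosses $1-y_0$, note that before $\tau$ both orbits evolve multiplicatively with $a_\omega^n\le a_1$ (via $a_\omega^n y\le 1-y_0=a_1(1-x_0)\le a_1y$), treat the crossing step separately, and hand off to Lemma~\ref{lem:A5} afterwards; the only real divergence is at $n=\tau$, where the paper closes with the shorter overshoot comparison $f_\omega^\tau(y)\le a_\omega^\tau y$ together with $a_\omega^\tau\le a_1$, while you rerun the monotone-slope argument from Lemma~\ref{lem:A5}, correctly supplying the extra inequality $a_0y-a_1x<0$ that the weaker hypothesis $y\le 1-y_0$ requires. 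One small repair: your ``short arithmetic check'' that $f_\omega^\tau(x)\ge 1-x_0$ in the sub-case $a_\omega^{\tau-1}x\le 1-x_0$ reduces to $a_1x\ge y$, which does not follow from $x\ge 1-x_0$ and $y\le 1-y_0$ alone but does follow from the standing requirement $\eta_1<x_0-y_0=(a_1-1)(1-x_0)$ combined with $x\ge 1-x_0$ and $|x-y|<\eta_1$.
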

\begin{proof}[Proof of Lemma 2]
The proof is essentially the same as in the case of previous lemma. We define $t$ and $r$ in the same way and assume without loss of generality that $t=1$, $r=u$ (for $n\ge r$ we can apply Lemma \ref{lem:A5}). We again observe that  $\xo{y}{n}=a^n_\omega y$, $\xo{x}{n}=a^n_\omega x$, and $f^n_\omega(y)\le 1-y_0$ for $n\le r-1$. The difference is that $y>1-y_0$ is not true anymore. However, by the definition of $a_1$ we have $1-y_0=a_1(1-x_0)\le a_1y$, thus $a_\omega^n\le a_1$ which proves the assertion for $n\le r-1$ (cf. (\ref{A7})).

If $n=r$ then we have again two cases. If $\xo{x}{r-1}\ge 1-x_0$, then the statement is obviously true, since both $f_0, f_1$ are contractions on $[1-x_0, x_0]$ and the statement is true for $n=r-1$. If $\xo{x}{r-1}<1-x_0$ then $a^r_\omega x=\xo{x}{r}<1-y_0=a_1(1-x_0)\le a_1x$, so $a^r_\omega y- a^n_\omega y\le a_1(y-x)$. Observation that $f^r_\omega(y)<a_\omega^r y$ yields the assertion.
\end{proof}

% sddddddddddddddddddddddddddddddddddddd

\begin{proof}[Proof of Proposition 1]
We can define the following infinite sequences of natural numbers
$$t_1:=\min\{n\ge 1: \textrm{$\xo{x}{n}<1-x_0$ or $\xo{y}{n}>x_0$}\},$$
$$
t_{k+1} := \left\{ \begin{array}{ll}
\min\{ n\ge t_k : \xo{y}{n}>x_0 \} & \textrm{if $\xo{x}{t_k}<1-x_0$}\\
\min\{ n\ge t_k : \xo{x}{n}<1-x_0 \} & \textrm{if $\xo{y}{t_k}>x_0$}
\end{array} \right., \quad k\ge 1,
$$
$$
u_k:= \left\{ \begin{array}{ll}
\max\{n\le t_{k+1} : \xo{x}{n}<1-y_0\} & \textrm{if $\xo{x}{t_k}<1-x_0$}\\
\max\{n\le t_{k+1} : \xo{y}{n}>y_0\} & \textrm{if $\xo{y}{t_k}>x_0$}
\end{array} \right., \quad k\ge 1.
$$

\noindent To finish the proof notice that the statement for $n\le u_1$ follows from Lemma \ref{lem:A6} (or its symmetric version) with $u=u_1$. Hence, from the definition of $(u_k)$, the points $f^{u_1}_\omega(x), f^{u_1}_\omega(y)$ satisfy assumptions of Lemma \ref{lem:A5} (or its symmetric version) with $u=u_2-u_1$. We continue in this fashion: for every $k$ the points $f^{u_k}_\omega(x), f^{u_k}_\omega(y)$ satisfy assumptions of Lemma \ref{lem:A5} or its symmetric version with $u=u_{k+1}-u_k$, and the conclusion follows.

To obtain the statement for any $\omega$ observe that for some $k$ we cannot define $t_{k+1}$ and in this case either Lemma \ref{lem:A5} or \ref{lem:A6} applies for  $f^{u_k}_\omega(x), f^{u_k}_\omega(y)$ with arbitrary large $u$.
\end{proof}

%%%%%%%%%%%%%%%%%%%%%%%%           LEMMA 4

\begin{prop}
\label{prop:A4}
There exists $\eta_2>0 $ such that if $x, y\in [1-x_0, x_0]$ and $|x-y|<\eta_2$, then
$$\mbE{x}|\x{x}{n}-\x{y}{n}|\le Lq^n|x-y|$$
for all natural $n$, $L\ge 1$, $q<1$.
\end{prop}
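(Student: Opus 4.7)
The plan is to build a Lyapunov function for the coupled random process $\bigl(f_\omega^n(x), f_\omega^n(y)\bigr)$. Let
\[
\psi(z) := [z(1-z)]^{-\gamma}, \qquad V(x,y) := |x-y|\,\psi(x)\,\psi(y),
\]
where $\gamma > 1/2$ is an exponent to be chosen. The strategy is to prove a one-step contraction
\[
\mathbb{E}_x\!\bigl[V(f_\omega(x), f_\omega(y))\bigr] \le q\, V(x,y) \qquad (\ast)
\]
for some $q \in (0,1)$, uniformly in $(x,y) \in (0,1)^2$ with $|x-y|$ below a suitable threshold. Fixing $\eta_2 \le \eta_1/a_1$ and invoking Proposition \ref{prop:A3}, one has $|f_\omega^k(x) - f_\omega^k(y)| \le a_1\eta_2 \le \eta_1$ for every $k$, so all the iterated pairs stay in the range where $(\ast)$ is to be verified. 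Iterating $(\ast)$ by the Markov property yields $\mathbb{E}_x[V(f_\omega^n(x), f_\omega^n(y))] \le q^n\, V(x,y)$; and since $\psi \ge \psi(1/2) = 4^{\gamma}$ globally, while $\psi \le [x_0(1-x_0)]^{-\gamma}$ on $[1-x_0, x_0]$, this translates to
\[
\mathbb{E}_x\!\bigl[|f_\omega^n(x) - f_\omega^n(y)|\bigr] \le L\, q^n\, |x-y|,\qquad L := [4x_0(1-x_0)]^{-2\gamma} \ge 1.
\]

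The heart of the argument is $(\ast)$, which I would verify by a case analysis based on the position of $(x,y)$ relative to the breakpoints $x_0$ and $1-x_0$. For $x,y$ in a small left neighborhood of $0$ (both in $(0, 1-x_0)$, where $f_0$ and $f_1$ are affine with slopes $a_0$ and $a_1$), one computes
\[
\frac{\mathbb{E}_x[V(f_\omega(x), f_\omega(y))]}{V(x,y)} = p_0(x)\,a_0^{1-2\gamma}\,R_0(x,y) + p_1(x)\,a_1^{1-2\gamma}\,R_1(x,y),
\]
where $R_i(x,y) = [(1-a_ix)(1-a_iy)/((1-x)(1-y))]^{-\gamma} \to 1$ as $x,y \to 0^+$. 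Thus the factor tends to $F(\gamma) := p_0(0)\,a_0^{1-2\gamma} + p_1(0)\,a_1^{1-2\gamma}$, which satisfies $F(1/2) = 1$ and $F'(1/2) = -2\Lambda_0 < 0$ by assumption (A4); so $F(\gamma) < 1$ strictly for $\gamma$ slightly above $1/2$. The continuity of $p_0, p_1$ at $0$ (with Dini modulus from (A2) providing quantitative error control) promotes this to a uniform bound on a one-sided neighborhood of $0$. A symmetric argument near $1$ uses $\Lambda_1 > 0$. Inside the middle region $[1-x_0, x_0]^2$ both $f_0$ and $f_1$ have slope $a_0 < 1$ and $\psi$ is bounded above and below by positive constants, so the contraction factor is at most $a_0$ times a fixed ratio and is $< 1$ once $\gamma$ is close enough to $1/2$. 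The straddling configurations, where $x$ and $y$ lie on opposite sides of a breakpoint, give slope ratios $|f_i(x)-f_i(y)|/|x-y|$ that are convex combinations of $a_0$ and $a_1$, and the smoothness of $\psi$ allows the same $\gamma$ to handle these cases.

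The main obstacle is choosing a single $\gamma$ that works in every region simultaneously. The boundary analyses force $\gamma > 1/2$, while the middle and transitional estimates impose an upper bound $\gamma < \gamma_0$; one must verify $\gamma_0 > 1/2$, which is precisely where assumption (A1) on the geometry of $(x_0, y_0)$ enters quantitatively, by preventing the ratios $\psi(f_i(x))/\psi(x)$ from growing too fast on $[1-x_0, x_0]$ so that $a_0$ times the squared ratio stays below $1$ on a nontrivial range of exponents above $1/2$. The Dini continuity of $p_0,p_1$ is used in turn to make the convergence $p_i(x)\to p_i(0)$ uniform enough that the one-step inequality in a neighborhood of the boundary can actually be obtained from the limiting pointwise estimate. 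Once such a $\gamma$ is fixed and $(\ast)$ is verified uniformly, the iteration and final translation back to $\mathbb{E}_x[\Delta_n]$ are routine.
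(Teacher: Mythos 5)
Your overall strategy---a one-step Lyapunov drift condition for $V(x,y)=|x-y|\,\psi(x)\psi(y)$ with $\psi(z)=[z(1-z)]^{-\gamma}$---is genuinely different from the paper's argument, but the key inequality $(\ast)$ is false under (A1)--(A4), so the proof does not go through. The problem sits exactly where you wave your hands, in the ``transitional'' region between the boundary regime and $[1-x_0,x_0]$. Take $x_0=3/4$, $y_0=1/2$, so $a_0=2/3$, $a_1=2$, and let $x=y$ approach $1-x_0=1/4$ from below. Both maps are affine there, and your own formula gives
\[
\frac{\mathbb{E}_x\bigl[V(f_\omega(x),f_\omega(y))\bigr]}{V(x,y)}
\;\longrightarrow\;
p_0(x)\,a_0^{1-2\gamma}\Bigl(\tfrac{1-a_0x}{1-x}\Bigr)^{-2\gamma}
+p_1(x)\,a_1^{1-2\gamma}\Bigl(\tfrac{1-a_1x}{1-x}\Bigr)^{-2\gamma}
=\tfrac{2}{3}p_0(x)\bigl(\tfrac{729}{400}\bigr)^{\gamma}+2p_1(x)\bigl(\tfrac{9}{16}\bigr)^{\gamma}.
\]
Assumptions (A2)--(A4) constrain $p_0,p_1$ only at the endpoints $0$ and $1$; at the interior point $x=1/4$ they may take any value in $(0,1)$. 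With $p_0(1/4)=p_1(1/4)=1/2$ the right-hand side equals $\tfrac13(1.8225)^{\gamma}+(0.5625)^{\gamma}$, which is $1.2$ at $\gamma=1/2$, about $1.17$ at $\gamma=1$, and exceeds $1$ for \emph{every} $\gamma>0$. So no choice of $\gamma$ makes $(\ast)$ hold uniformly: the local expansion/contraction balance at intermediate points is governed by $p_i$ there, which (A4) does not control. Since the iterated pair genuinely visits this region, the iteration of $(\ast)$ collapses. (Your limiting computation near $0$ and the worst case inside $[1-x_0,x_0]$ are fine; it is the region in between that kills the one-step drift.)

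This failure is precisely why the paper avoids a one-step estimate and instead runs a renewal-type, multi-step argument: it uses Proposition~\ref{prop:A3} to cap the growth of $|f_\omega^n(x)-f_\omega^n(y)|$ at the fixed factor $a_1$ for all times, Lemma~\ref{lem:A1} to get exponential tails for excursions near the boundary, and Lemma~\ref{lem:B2} to get a uniformly positive probability, on each return to the middle, of an event that contracts the pair by $1/(2a_1)$. Exponential moments of the resulting stopping times $S_n$ then give $\mathbb{E}_x|f_\omega^n(x)-f_\omega^n(y)|\le Lq^n|x-y|$. If you want to salvage a Lyapunov-type formulation, you would have to prove a drift inequality over a fixed large number of steps $N$ (or at the random return times), which amounts to re-deriving the paper's estimates; the one-step version with this $\psi$ cannot work.
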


\noindent From now on, $M, \alpha, \varepsilon$, and $p$ always stand for the quantities chosen in the proof of existence of a stationary measure. Fix $x\in (0,1)$ and define
$$A_{x, j}:=\{\omega\in\Sigma : \x{x}{j}(\omega) < \varepsilon\}, \qquad A^{x,j}:=\{\omega\in\Sigma : \x{x}{j}(\omega) > 1-\varepsilon\},$$
$$B_{x, n}:=\bigcap_{j=1}^nA_{x,j}, \qquad B^{x,n}:=\bigcap_{j=1}^nA^{x,j}.$$

%%%%%%%%%%%%% Lemat

\begin{lem}
\label{lem:A1}
If $x<\vare$ then
$$\mbP{x}(B_{x,n})\le  (\varepsilon/x)^\alpha p^n$$
for all $n\ge 0$. The same estimation holds for $\mbP{1-x}(B^{1-x,n})$.
\end{lem}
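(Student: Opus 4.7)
The plan is a one-step drift argument with the test function $g(t)=t^{-\alpha}$, exploiting that on $(0,\varepsilon)\subset(0,1-x_0)$ both maps are pure multiplications. Indeed $f_0(t)=a_0t$ by definition, and $f_1(t)=1-f_0(1-t)=a_1t$ because $1-t>x_0$ forces the second branch of $f_0$. Consequently, for every $y\in(0,\varepsilon)$,
$$Ug(y)=p_0(y)(a_0y)^{-\alpha}+p_1(y)(a_1y)^{-\alpha}=g(y)\bigl[p_0(y)a_0^{-\alpha}+p_1(y)a_1^{-\alpha}\bigr]\le p\cdot g(y),$$
where the last step bounds $p_i(y)$ by $\max_{t\le\varepsilon}p_i(t)$ and invokes the first line of (\ref{A2}).

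I would then prove by induction on $n$ that
$$\mbE{x}\bigl[g(\x{x}{n})\,\mathbf{1}_{B_{x,n}}\bigr]\le p^n g(x).$$
The case $n=0$ is an equality. For the inductive step, condition on $\omega_1,\dots,\omega_{n-1}$: on $B_{x,n-1}$ the value $y:=\x{x}{n-1}$ lies in $(0,\varepsilon)$, hence
$$\mbE{x}\bigl[g(\x{x}{n})\,\mathbf{1}_{A_{x,n}}\bigm|\omega_1,\dots,\omega_{n-1}\bigr]\le (Ug)(y)\le p\cdot g(y),$$
and integrating against $\mathbf{1}_{B_{x,n-1}}$ produces the required recursion.

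To finish, observe that $g(\x{x}{n})\ge\varepsilon^{-\alpha}$ on $B_{x,n}$, so
$$\varepsilon^{-\alpha}\,\mbP{x}(B_{x,n})\le\mbE{x}\bigl[g(\x{x}{n})\,\mathbf{1}_{B_{x,n}}\bigr]\le p^n x^{-\alpha},$$
which rearranges to $\mbP{x}(B_{x,n})\le(\varepsilon/x)^\alpha p^n$. The symmetric bound for $\mbP{1-x}(B^{1-x,n})$ is obtained by the identical argument with the roles of $a_0$ and $a_1$ swapped, using the second line of (\ref{A2}). I do not anticipate any real obstacle here: the genuine work is already contained in the choice of $\alpha$, $p$, $\varepsilon$ encoded in (\ref{A2}), and the lemma is essentially an iteration of that single inequality.
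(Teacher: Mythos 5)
Your proof is correct and is essentially the paper's argument in different clothing: the paper iterates the same one-step inequality from (\ref{A2}) on the quantity $\mathds{1}_{B_{x,n-1}}(a_\omega^n)^{-\alpha}$ and finishes with Chebyshev, which coincides with your Lyapunov-function recursion for $g(t)=t^{-\alpha}$ because $f_\omega^j(x)=a_\omega^j x$ on the event in question. No gaps; the symmetric case is handled identically in both.
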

\begin{proof}
Fix $x\le \varepsilon$ and recall that we write $a^n_\omega=a_{\omega_n}\ldots a_{\omega_1}$ for $n\ge 1$ and $\omega\in\Sigma$.
We first observe that $\mathbb{E}_x\mathds{1}_{B_{x,n-1}}(a_\omega^n)^{-\alpha}\le p^n$. Indeed, by (\ref{A1}) we have
$$\mbE{x}( a_{\omega_{n}}^{-\alpha} | \mathcal F_{n-1})=p_0(\x{x}{n-1}(\omega))a_{0}^{-\alpha}+p_1(\xo{x}{n-1})a_1^{-\alpha}<p$$
provided that $\omega\in B_{x, n-1}$.
Here $(\mathcal F_n)_{n\ge 1}$ stands for the natural filtration in $(\Sigma, \mathcal F)$. Therefore
$$\mathbb{E}_x\mathds{1}_{B_{x,n-1}}(a_\omega^{n})^{-\alpha}=\mbE{x}\bigg(\mathds{1}_{B_{x,n-1}}(a_\omega^{n-1})^{-\alpha} \mbE{x}\big( a_{\omega_{n}}^{-\alpha} | \mathcal F_{n-1} \big)\bigg)<p\mathbb{E}_x\mathds{1}_{B_{x,n-1}}(a_\omega^{n-1})^{-\alpha}.$$
Proceeding by induction yields $\mathbb{E}_x\mathds{1}_{B_{x,n-1}}(a_\omega^n)^{-\alpha}\le p^n$.

Observe that for all $\omega\in\Sigma$ with $\omega\in B_{x,n}$ we have $\x{x}{j}(\omega)=a_\omega^jx$ and, in consequence,
$$B_{x, n}=\{\omega\in\Sigma : a_{\omega}^jx < \varepsilon \ \textrm{for all $j\le n$}\}.$$
The Chebyshev inequality gives now
$$\mbP{x}(B_{x,n})= \mbP{x}(\{\omega\in\Sigma:  \textrm{$a^j_\omega x<\varepsilon$ for all $j \le n$}\})$$
$$\le  \mbP{x}(\{\omega\in\Sigma: (a^n_\omega)^{-1} >x/\varepsilon \}\cap B_{x,n-1}) $$
$$\le (\varepsilon/x)^\alpha\mathbb{E}_x\mathds{1}_{B_{x,n-1}}(a^n_\omega)^{-\alpha}\le (\varepsilon/x)^\alpha p^n$$
which establishes our claim for $\mbP{x}(B_{x,n})$. The same proof works for $\mbP{1-x}(B^{1-x,n})$.
\end{proof}

%%%%%%%%%%%%%%%%%%%%%  LEMAT

\begin{lem}
\label{lem:B2}
There exists a point $c\in (1-x_0, x_0)$ such that for every, $h>0$, $\rho>0$ there exist a natural number $n_1$ and $\delta>0$ such that
$$\inf_{x\in[h,1-h]}\mbP{x}(\x{x}{n_1}\in (c-\rho, c+\rho))>0$$
for $x\in [h,1-h]$.
\end{lem}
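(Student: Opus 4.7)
The plan is to pick $c$ to be an attracting fixed point of a suitable composition of $f_0$ and $f_1$ lying in $(1-x_0,x_0)$, and then show that any trajectory starting in $[h,1-h]$ can be steered into an arbitrary neighborhood of $c$ in uniformly bounded time with positive probability. I would take $c:=a_0/(1+a_0)$, the fixed point of $g:=f_0\circ f_1$ when both factors act on their left branches. Using (A1) (specifically $x_0+y_0>1$, which follows from $x_0>1/2$ and $y_0\ge 1/2$) I would first verify $c\in(1-x_0,x_0)$, and then check that on the interval $V:=[1-x_0,(x_0+a_0-1)/a_0]$ the composition $g$ is affine with slope $a_0^2$ and satisfies $g(V)\subseteq V$; the inclusion reduces to the inequality $y_0(1-y_0)>x_0(1-x_0)$ already used in the proof of (\ref{A5}). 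Consequently $g^k(V)\to\{c\}$ uniformly as $k\to\infty$.

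Next, for each $\eta<\rho/2$ I would build, for every $y\in[h,1-h]$, an explicit word in $f_0,f_1$ of length at most $N=N(h,\eta)$ driving $y$ into $(c-\eta,c+\eta)$. The building blocks are: if $y<1-x_0$, iterating $f_1$ (which acts as multiplication by $a_1>1$) lands us in $[1-x_0,1-y_0]\subseteq V$ in at most $\lceil\log_{a_1}((1-x_0)/h)\rceil$ steps; symmetrically, for $y>x_0$, iterating $f_0$ brings us into $(y_0,x_0]$, and then one more $f_0$ enters $V$; for $y\in[1-x_0,x_0]$ at most one application of $f_0$ puts us in $V$; finally, a bounded number of applications of $g$ contracts the result into $(c-\eta,c+\eta)$.

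To get a uniform time $n_1$, I set $n_1:=N+2$ and pad each word by copies of $g$ (each of length two, each preserving $(c-\eta,c+\eta)\subseteq(c-\rho,c+\rho)$). Continuity of $f_0,f_1$ implies that the constructed word carries a whole neighborhood of its base-point into $(c-\rho,c+\rho)$, and compactness of $[h,1-h]$ yields a finite subcover with finitely many words. By (A3) and continuity of $p_0,p_1$, the constant $\delta_0:=\inf_{x\in[0,1]}\min\{p_0(x),p_1(x)\}>0$, so each specific length-$n_1$ word is realised with probability at least $\delta_0^{n_1}$, producing the required uniform lower bound $\delta:=\delta_0^{n_1}$.

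The chief technical obstacle is a parity issue: $c$ is a period-$2$ point of the deterministic composition $g$, and generically no odd-length composition of $f_0,f_1$ fixes $c$, so padding by copies of $g$ only works when $n_1-k_i$ is even, whereas the initial steering length $k_i$ takes both parities as $y$ ranges over $[h,1-h]$. I would resolve this by constructing for each $y$ two steering words of consecutive parities, both landing in $(c-\eta,c+\eta)$, using the one-step flexibility in the steering moves (for instance, inserting or omitting a single extra $f_0$ when entering $V$), so that one of the two always has the correct parity relative to $n_1$. The case analysis near the parity thresholds $y=a_1^{-k}(1-x_0)$ is delicate but elementary.
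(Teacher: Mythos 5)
Your overall architecture is the same as the paper's: the same point $c=a_0/(1+a_0)$, the unique attracting fixed point of $f_0\circ f_1$ on an invariant subinterval of $(1-x_0,x_0)$; steering every $y\in[h,1-h]$ into that interval in boundedly many steps; contracting with copies of $f_0\circ f_1$; and extracting a uniform $\delta>0$ from (A3) via compactness. The verifications you list ($c\in(1-x_0,x_0)$, $g(V)\subseteq V$ reducing to $y_0(1-y_0)>x_0(1-x_0)$, the three steering cases) all check out, and you correctly identify the one real obstruction, namely parity.

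The gap is precisely in your proposed resolution of that obstruction. Inserting or omitting ``a single extra $f_0$'' does not in general produce a steering word of the opposite parity that still lands in $V$: for $z\in V$ one has $f_0(z)=a_0z$, which falls below $1-x_0$ whenever $z<(1-x_0)/a_0$, and $f_1(z)=1-a_0+a_0z$, which exceeds the right endpoint $v=(x_0+a_0-1)/a_0$ of $V$ whenever $z>(v-1+a_0)/a_0$. These two bad regions can overlap and can even contain $c$ itself. Concretely, for $x_0=0.51$, $y_0=0.5$ one gets $a_0\approx 0.980$, $c\approx 0.495$, $1-x_0=0.49$ and $v=0.5$, so $f_0(c)\approx 0.485<1-x_0$ and $f_1(c)\approx 0.505>v$: no single-letter extension of a word ending near $c$ stays in $V$, and no odd-length word of bounded form fixes a neighbourhood of $c$ one letter at a time. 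The correct repair --- which is exactly what the paper's proof does --- is a variable odd-length detour: if $f_0(z)<1-x_0$, apply $f_1$ (which acts as multiplication by $a_1$ there) and iterate the pair $f_1\circ f_0$, each application multiplying the point by $a_0a_1>1$, until after some $n\ge 0$ pairs one finally has $f_0\big((a_0a_1)^nz\big)\ge 1-x_0$; the detour $f_0(f_1\circ f_0)^n$ has odd length $2n+1$ and terminates because $(a_0a_1)^nz$ grows geometrically, and its endpoint lies in $[1-x_0,1-y_0)\subseteq V$. With $a_0a_1$ close to $1$ (as in the example above) the required $n$ can be large, though it is bounded once the parameters are fixed. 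Your proof becomes complete once you replace the one-letter adjustment by this detour; as written, the ``delicate but elementary'' case analysis you defer to cannot be carried out by single insertions or omissions alone.
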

\begin{proof}
\noindent First of all, by (\ref{A5}) and symmetry we have $f_1([1-x_0, y_0+a_1\eta])\subseteq [1-y_0, x_0]$ and $f_0([1-y_0, x_0])\subseteq [1-x_0, y_0]$. Hence the composition $f_0\circ f_1$ restricted to the interval $[1-x_0, y_0+a_1\eta]$  is a contraction and acts to the interval $[1-x_0, y_0+a_1\eta]$. Let $c$ be the unique attractive fixed point for this composition on $[1-x_0, y_0+a_1\eta]$. For any point $x\in [1-x_0, y_0+a_1\eta]$ and $\rho>0$ there exists $m'$ such that $\mbP{x}(\x{x}{2m'}\in (c-\rho, c+\rho))>0$.

Choose $h>0$. Now it is sufficient to show that for any $x\in [h,1-h]$ there exists a number $m''$ such that $\mbP{x}(\x{x}{2m''}\in [1-x_0, y_0+a_1\eta])>0$. Then $n_1=2m+2m'$ will be desired number, where $m$ is the maximum of $m''$ for $x\in [h,1-h]$. Indeed, the quantity
$$\inf_{x\in [h,1-h]}\min_{(i_1,\ldots,i_{n_1})\in\{0,1\}^{n_1}} p_{i_{n_1}}(\f{i}{n_1-1}{x})\cdot\ldots\cdot p_{i_1}(x)$$
is positive by (A3) and for any $x\in [h, 1-h]$ we can first take a sequence of length $2m''$ with $\mbP{x}(\x{x}{2m''}\in [1-x_0, y_0+a_1\eta])>0$ (which may be less than $2m$) and then apply the composition $f_0\circ f_1$ exactly $m'+(m-m'')$ many times.

We are left to show that for any $x\in [h,1-h]$ there exists $m''$ such that $\mbP{x}(\x{x}{2m''}\in [1-x_0, y_0+a_1\eta])>0$. It is readily seen that there exist $m'''$ and a sequence $(i_1,\ldots, i_{m'''})\in \{0,1\}^{m'''}$ such that 
$z_0:=\f{i}{m'''}{x}\in [1-x_0, y_0+a_1\eta]$. If $m'''$ is even then put $m'''=2m''$. If not then apply $f_0$ to $z_0$. If $f_0(z_0)\ge 1-x_0$ then $m'''+1=2m''$ is a desired number. If not then $f_0(z_0)<1-x_0$, hence $z_1:=f_1\circ f_0(z_0)\ge 1-y_0$. Note that $z_1=a_1a_0z_0>z_0$. We can repeat this procedure and define $z_{n+1}$ while $f_0(z_n)<1-x_0$. This procedure, however, must finish for some $n$, since $z_n=(a_1a_0)^nz_0$ which eventually become greater than $1-y_0$ for some n, which means that $f_1^{-1}(z_n)> 1-x_0$. A contradiction. Let $n$ be the minimal number with $f_0(z_n)\ge 1-x_0$. Then $2m''=m'''+2n+1$ has the desired property.
\end{proof}

\begin{proof}[Proof of Proposition 2]
%Once again, in order to simplify the reasoning and notation we assume that $x<y$. Take $\xi>0$ such that $\xo{y}{n}\in [\xi, 1-\xi]$ for all $n$ with  $\xo{x}{n}\in[\vare, 1-\vare]$. It is a simple matter to show the existence of such $\xi$, by Lemma \ref{prop:A3}. One can take for example $\xi$ to be so close to zero, that $f_0^m(1-\xi)>1-\varepsilon$, where $m$ is such that $1-x_0 \ge f_1^m(\varepsilon)\ge x_0-\eta$. Then $f^n_\omega(x)<1-\varepsilon<1-\xi<f^n_\omega(y)$ obviously contradicts Lemma \ref{prop:A3}.

%We begin by proving the existence of a natural $m$ such that for any two points $\xi\le u<v\le 1-\xi$ with $|u-v|$ sufficiently small there exists a sequence $(i_1,\ldots,i_m)\in \{0,1\}^m$ such that
%$$|\f{i}{m}{u}-\f{i}{m}{u}| \le \frac{1}{2a_1}|u-v|$$
%and
%$$1-x_0\le \f{i}{m}{u}<\f{i}{m}{v}\le x_0.$$

Let $c$ be the point from Lemma \ref{lem:B2}. Take $\rho>0$ to be any positive number less than distance from $c$ to the boundary points of $[1-x_0, y_0]$. Take $h=\varepsilon$ (recall that $M, \varepsilon, \alpha$ were the numbers given in the proof of existence of the stationary measure; see the comment under Proposition \ref{prop:A4}). Take $n_1$ to be the numbers given in Lemma \ref{lem:B2}. By the continuity of $f_0, f_1$ and the compactness of $[h,1-h]$, there exists $\eta_2>0$ such that if $|x-y|<a_1\eta_2$ then
$$\inf_{x\in[h,1-h]}\mbP{x}(\x{y}{n_1}\in (c-\rho, c+\rho))>0.$$

Let $n_2$ be such that $(a_0)^{n_2}<1/(2a_1)$. Put $m:=n_1+n_2$ and $\xi:=f_0^m(\varepsilon)$ (i.e. $\xi$ is such a number that $\mbP{x}(f^m_\omega(x)\in [\xi, 1-\xi])=1$ for $x\in[\varepsilon, 1-\varepsilon]$). Eventually put
$$\delta:=\inf_{x\in [h,1-h]} \min_{(i_1,\ldots,i_m)\in\{0,1\}^m} p_{i_m}(\f{i}{m-1}{x})\cdot\ldots\cdot p_{i_1}(x)>0.$$

Let us define the following optional times on $\Sigma$ for $u\in (0,1)$:
$$T_1(x):= \min\{n\ge 0: \vare\le \x{x}{n}\le 1-\vare\}+m,$$
$$T_{n+1}(x):=T_n(x)+T_1(\x{x}{T_n(x)})\circ\theta_{T_n(x)},$$
\begin{equation*}
\begin{gathered}
S_1(x):=\min\bigg\{n\ge 1: \forall_{|y-x|<\eta_2} |\x{x}{n}-\x{y}{n}|\le \frac{1}{2a_1}|x-y|, \\ 
\textrm{and} \ \x{x}{n}, \x{y}{n}\in [1-x_0, x_0]\bigg\},
\end{gathered}
\end{equation*}
$$S_{n+1}(x):=S_n(x)+S_1(\x{x}{S_n(x)})\circ\theta_{S_n(x)}$$
$$\tau_n(x):=\max\{k\ge 1: T_k(x)\le n\},$$
$$\sigma_n(x):=\max\{k\ge 1: S_k(x)\le n\},$$
for $x\in (0,1)$. From what has already been proved we conclude that 
$$\mbP{x}(S_1(x)>T_1(x))\le 1-\delta$$
for all $\xi\le x\le 1-\xi$. By the strong Markov property
$$\mbP{x}(S_1(x)>T_{n+1}(x))= \mbE{x}\mbP{x}\big(S_1(x)>T_{n+1}(x) | \mathcal F_{T_n}\big)$$
$$= \mbE{x}\bigg(\mathds{1}_{\{S_1(x)>T_{n}(x)\}}\mbP{\x{x}{T_n(x)}}\bigg( S_1(\x{x}{T_n(x)})\circ\theta_{T_n(x)}>T_1(\x{x}{T_n(x)})\bigg)\bigg)$$
$$\le (1-\delta)\mbP{x}(S_1(x)>T_n(x)),$$
for all $\xi\le x\le 1-\xi$. By induction argument we get
$$\mbP{x}(S_1(x)>T_n(x))\le (1-\delta)^n$$
for such $x$.

By Lemma \ref{lem:A1} there exists $C_1>0$ and $\gamma\in(0,1)$ such that $\mbE{x}e^{\gamma T_1(x)}\le C_1$ for all $x\in [\xi, 1-\xi]$. Induction argument applied below yields
$$\mbE{x} e^{\gamma T_n(x)}=\mbE{x}\bigg(e^{\gamma T_{n-1}(x)}\mbE{x}\big(e^{T_1(f^{T_{n-1}}_\omega(x))}   |\mathcal F_{T_{n-1}} )\bigg)\le C_1 \mbE{x} e^{\gamma T_{n-1}(x)} \le C_1^n$$
for $x\in [\xi, 1-\xi]$ and $n\ge 1$, since $\x{x}{T_n}\in[\xi, 1-\xi]$ for every $n$. Fix $\kappa\in (0,1)$. We have again by the Chebyshev inequality
$$\mbP{x}(\tau_n(x)<\kappa n)\le \mbP{x}(T_{\lfloor\kappa n\rfloor+1}(x)>n)\le C_1^{\lfloor\kappa n\rfloor+1}e^{-\gamma n}$$
for all $x\in [\xi, 1-\xi]$, thus
$$\mbP{x}(S_1(x)>n)\le \mbP{x}(\{S_1(x)>n\}\cap \{\tau_n(x)\ge \kappa n\})+\mbP{x}(\tau_n(x)<\kappa n)$$
$$\le \mbP{x}(S_1(x)>T_{\lfloor \kappa n \rfloor}(x))+C_1^{\lfloor\kappa n\rfloor+1}e^{-\gamma n}\le (1-\delta)^{\lfloor \kappa n  \rfloor}+C_1(C_1^\kappa e^{-\gamma})^n.$$
Choose $\kappa$ such that $C_1^\kappa e^{-\gamma}<1$. By the above we have
$$\mbE{x}e^{\rho S_1(x)}\le e^\gamma \sum_{n=0}^\infty e^{\rho n}\mbP{x}(S_1(x)>n)\le C_2<\infty,$$
for all $x\in[\xi,1-\xi]$ provided that $\rho\in (0,1)$ was chosen sufficiently small. Again, conditioning argument yields
$$\mbE{x}e^{\rho S_n(x)}\le C_2^n.$$
Eventually, using again the Chebyshev inequality we obtain for such $x, y$ and any $\lambda\in (0,1)$,
$$\mbE{x}|\x{x}{n}-\x{y}{n}|=\mbE{x}\mathds{1}_{\{\sigma_n(x,y)< \lambda n\}}|\x{x}{n}-\x{y}{n}| + \mbE{x}\mathds{1}_{\{\sigma_n(x,y)\ge\lambda n\}}|\x{x}{n}-\x{y}{n}|$$
$$\le a_1|x-y|\mbP{x}(S_{\lfloor \lambda n\rfloor(x,y)}> n)+\frac{1}{2^{\lambda n}}|x-y| $$
$$\le a_1 C_2^{\lfloor \lambda n\rfloor}e^{-\rho n}|x-y|+\frac{1}{2^{\lambda n}}|x-y|\le\bigg(a_1(C_2^\lambda e^{-\rho})^n+\frac{1}{2^{\lambda n}}\bigg)|x-y|.$$
Take $\lambda$ such that $C_2^\lambda e^{-\rho}<1$ and put $L=a_1+1$, $q=\max\{C_2^\lambda e^{-\rho}, \frac{1}{2^\lambda}\}<1$. Then by the above we have
$$\mbE{x}|\x{x}{n}-\x{y}{n}|\le Lq^n|x-y|$$
for all natural $n$ which is the desired conclusion.
\end{proof}

%%%%%%%%%%%%%%%%%%%%            PROOF OF UNIQUENESS
\begin{proof}[Proof of uniqueness]
Throughout the proof $p_{i_1,\ldots,i_n}(x)$ stands for
$$ p_{i_n}(\f{i}{n-1}{x})\cdot\ldots\cdot p_{i_1}(x).$$
First observe that for any $x\in (0,1)$ there exists a finite sequence $(i_1,..,i_l)\in \{0,1\}^l$ for some $l$ such that $\f{i}{l}{x}\in [1-x_0, y_0]$ which implies that the topological support of any $P$-invariant measure $\mu$ must have nonempty intersection with $[1-x_0, y_0]$. Further, $f_1\circ f_0([1-x_0, y_0])\subseteq (1-x_0, y_0)$ by (\ref{A5}) and $f_1\circ f_0$ is a contraction on the interval $[1-x_0, y_0]$, hence  this composition has exactly one attractive fixed point $c \in (1-x_0, y_0)$. Combining these facts yields $c\in \Gamma_\mu$ for all $P$-invariant measures $\mu$, where $\Gamma_\mu$ denotes the topological support of this measure. The proof is completed by showing that the family $(U^n\varphi)$ is equicontinuous at $c$ for any Lipschitz $\varphi$. Indeed, if there exist at least two different ergodic invariant measures $\mu_1$, $\mu_2$, then there exists a Lipschitz function $\varphi$ such that $\big|\int\varphi d\mu_1-\int\varphi d\mu_2\big|\ge \delta$ for some $\delta>0$. We consider the averages $\frac 1 n (\varphi(x)+U\varphi(x)+\ldots+U^{n-1}\varphi(x))$ which must differ from  $\frac 1 n (\varphi(c)+U\varphi(c)+\ldots+U^{n-1}\varphi(c))$ at most $\delta/2$, provided that $x$ is sufficiently close to $c$. On the other hand, $c\in\Gamma_{\mu_1}\cap\Gamma_{\mu_2}$, therefore in any neigbourhood of $c$ we can find points $x_1, x_2$ such that considered averages tend to $\int\varphi d\mu_1, \int\varphi d\mu_2$, respectively, which is a contradiction.

We are going to show that $(U^n\varphi)$ is equicontinuous at any point of $(1-x_0, x_0)$. Take $x\in (x_0, 1-x_0)$ and $\delta>0$. Take $n_0$ such that $\sum_{n=n_0}^\infty 2\beta(Lq^n)<\frac{\delta}{6\|\varphi\|_\infty}$ (by (A2)) and $Lq^n\le \frac{\delta}{3\textrm{Lip}(\varphi)}$ for $n\ge n_0$, where $\textrm{Lip}(\varphi)$ denotes the Lipschitz constant of $\varphi$. By Theorem 8 on the page 45 in \cite{Lorentz_Approximation_of_functions} there exists a concave function $\beta^*$ with $\beta(t)\le \beta^*(t)\le 2\beta(t)$. Thus we have $\sum_{n=n_0}^\infty \beta^*(Lq^n)<\frac{\delta}{3\|\varphi\|_\infty}$.

Take $y$ such that $|x-y|<\eta_2$ and
\begin{equation}
\label{A6} 
\sum \bigg| p_{i_1,\ldots, i_{n_0}}(x)-p_{i_1,\ldots, i_{n_0}}(y)\bigg|<\frac{\delta}{3\|\varphi\|_\infty},
\end{equation}
where the summation is over all finite sequences $(i_1,\ldots,i_{n_0})\in\{ 0,1\}^{n_0}$. It is satisfied provided that $|x-y|$ is less than, say, $d>0$. Then for $n\ge n_0$ we have
$$|U^n\varphi(x)-U^n\varphi(y)|$$
$$\le\sum p_{i_1,\ldots, i_n}(x)\bigg|\varphi(\f{i}{n}{x})-\varphi(\f{i}{n}{y})\bigg|$$
$$+\bigg|p_{i_1,\ldots,i_n}(x)-p_{i_1,\ldots, i_n}(y) \bigg| \|\varphi\|_\infty,$$
where the summation is over all finite sequences $(i_1,\ldots,i_n)\in\{ 0,1\}^n$. The first term is bounded by $\textrm{Lip}(\varphi)\mbE{x}|\x{x}{n}-\x{y}{n}|$, and the second term divided by $\|\varphi\|_\infty$ is bounded by
$$=\sum_{i_1,\ldots, i_{n}} \bigg| p_{i_n}(\f{i}{n-1}{x})-p_{i_n}(\f{i}{n-1}{y})\bigg|\cdot p_{i_1,\ldots, i_{n-1}}(x)$$
$$+\sum_{i_1,\ldots, i_{n}} p_{i_n}(\f{i}{n-1}{y})\bigg | p_{i_1,\ldots, i_{n-1}}(x)- p_{i_1,\ldots, i_{n-1}}(y)\bigg|$$
$$\le 2\mbE{x}\beta^*(|\x{x}{n}-\x{y}{n}|)+\sum_{i_1,\ldots, i_{n-1}}\bigg|p_{i_1,\ldots, i_{n-1}}(x)-p_{i_1,\ldots, i_{n-1}}(y) \bigg|.$$
The modulus of continuity $\beta^*$ is concave, therefore by the Jensen inequality we have
$$\le 2\beta^*(Lq^n)+\sum_{i_1,\ldots, i_{n-1}}\bigg|p_{i_1,\ldots, i_{n-1}}(x)-p_{i_1,\ldots, i_{n-1}}(y) \bigg|.$$
Continuing this procedure while $n> n_0$ and using (\ref{A6}) yields
$$\le \sum_{i=n_0}^n 2\beta^*(Lq^n)+\sum_{i_1,\ldots, i_{n_0}} \bigg| p_{i_1,\ldots, i_{n_0}}(x)-p_{i_1,\ldots, i_{n_0}}(y)\bigg|<\frac{\delta}{3\|\varphi\|_\infty}+\frac{\delta}{3\|\varphi\|_\infty}.$$
Again by the definition of $n_0$ we have
$$|U^n\varphi(x)-U^n\varphi(y)|< \textrm{Lip}(\varphi)\mbE{x}|\x{x}{n}-\x{y}{n}|$$
$$+\|\varphi\|_\infty\frac{\delta}{3\|\varphi\|_\infty}+\|\varphi\|_\infty\frac{\delta}{3\|\varphi\|_\infty}< \delta$$
for all $n$ and $y$ with $|x-y|<d$. Therefore $(U^n\varphi)$ is equicontinuous at any $x\in[1-x_0, x_0]$ which proves the uniqueness of the invariant measure $\mu_*$.
\end{proof}

%%%%%%%%%%%%%%%%                           THEOREM B
\section{The proof of Theorem 2}

\begin{lem}
\label{lem:B1}
There exists $0<h<1/2$ such that for all $0<\xi<1/2$ there exists $n_0$ such that $P^n\delta_x([h,1-h])=\mbP{x}(\x{x}{n}\in[h,1-h])\ge 1/2$  for all $x\in[\xi,1-\xi]$  and $n\ge n_0$.
\end{lem}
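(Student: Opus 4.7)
The plan is to introduce a Lyapunov function that blows up at the boundary $\{0,1\}$ and exploit a Foster--Lyapunov-type drift inequality to bound $\mbP{x}(X_n^x\notin[h,1-h])$ uniformly. Set
$$V(y):=\max\bigl(y^{-\alpha},(1-y)^{-\alpha}\bigr)\qquad\text{for } y\in(0,1),$$
where $\alpha,p,\varepsilon$ are the constants produced in the proof of existence of $\mu_*$; after shrinking $\varepsilon$ if necessary so that $a_1\varepsilon<1/2$, the explicit formulas $f_0(y)=a_0y$ and $f_1(y)=a_1y$ valid for $y<1-x_0$ together with (\ref{A2}) give
$$UV(y)=y^{-\alpha}\bigl(p_0(y)a_0^{-\alpha}+p_1(y)a_1^{-\alpha}\bigr)\le pV(y)\qquad\text{for } y\in(0,\varepsilon],$$
and the symmetric computation gives $UV(y)\le pV(y)$ on $[1-\varepsilon,1)$. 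On the central region $[\varepsilon,1-\varepsilon]$ both $f_0(y)$ and $f_1(y)$ lie at distance at least $a_0\varepsilon$ from $\{0,1\}$, hence $UV(y)\le(a_0\varepsilon)^{-\alpha}=:C$. Combining the three cases one obtains the drift inequality $UV(y)\le pV(y)+C$ for every $y\in(0,1)$.

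I would then iterate this inequality (using $U\mathbf{1}=\mathbf{1}$) to get
$$\mbE{x}V(X_n^x)=U^nV(x)\le p^nV(x)+\frac{C}{1-p}\qquad\text{for every } n\ge 0,\ x\in(0,1).$$
For $x\in[\xi,1-\xi]$ with $\xi<1/2$ one has $V(x)\le\xi^{-\alpha}$, so choosing $n_0=n_0(\xi)$ large enough that $p^{n_0}\xi^{-\alpha}\le 1$ yields the uniform bound $\mbE{x}V(X_n^x)\le K:=1+C/(1-p)$ for all $n\ge n_0$ and all $x\in[\xi,1-\xi]$. Note that $n_0$ depends on $\xi$ but the constant $K$ does not.

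To finish, for any $h<1/2$ the inclusion $\{X_n^x<h\}\subseteq\{V(X_n^x)>h^{-\alpha}\}$ together with the Chebyshev inequality yield $\mbP{x}(X_n^x<h)\le h^\alpha K$, and symmetrically $\mbP{x}(X_n^x>1-h)\le h^\alpha K$. Fixing $h\in(0,1/2)$ once and for all (independently of $\xi$) so that $2h^\alpha K\le 1/2$ then gives $\mbP{x}(X_n^x\in[h,1-h])\ge 1/2$ for every $n\ge n_0(\xi)$ and $x\in[\xi,1-\xi]$, as required. The main technical point is the verification of the drift inequality on the boundary regions: this is where the parameters of the proof of existence are reused, the exponent $\alpha$ being tailored in (\ref{A2}) precisely so that the linear combination appearing in $UV(y)$ is strictly smaller than $pV(y)$ near $0$ and near $1$.
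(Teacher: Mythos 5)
Your proof is correct, and it takes a genuinely different (and arguably cleaner) route than the paper's. The paper argues in two stages: for $x\in[\varepsilon,1-\varepsilon]$ it uses the $P$-invariance of the class $\mathcal P_{M,\alpha}$ established in the existence proof (since $M\varepsilon^\alpha\ge 1$ forces $\delta_x\in\mathcal P_{M,\alpha}$, one gets $P^n\delta_x([h,1-h])\ge 3/4$ for all $n$ once $Mh^\alpha<1/8$); for $x\in[\xi,1-\xi]$ near the boundary it conditions on the first hitting time $T$ of $[\varepsilon,1-\varepsilon]$, controls $\mbP{x}(T>n)\le(\varepsilon/\xi)^\alpha p^n$ via Lemma \ref{lem:A1}, and glues the two pieces with the strong Markov property. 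You replace this case split by a single global Foster--Lyapunov inequality $UV\le pV+C$ for $V(y)=\max(y^{-\alpha},(1-y)^{-\alpha})$, iterate it, and finish with Markov's inequality. The verification of the drift bound is sound: on $(0,\varepsilon]$ one has $\varepsilon<1-x_0$, so $f_0(y)=a_0y$ and $f_1(y)=a_1y\le a_1(1-x_0)=1-y_0\le 1/2$, whence $V(f_i(y))=(a_iy)^{-\alpha}$ and (\ref{A2}) gives exactly $UV(y)\le pV(y)$ (the symmetric bound near $1$ likewise), while on $[\varepsilon,1-\varepsilon]$ both images stay in $[a_0\varepsilon,1-a_0\varepsilon]$ so $UV\le(a_0\varepsilon)^{-\alpha}$. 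Crucially your constant $K=1+C/(1-p)$ is independent of $\xi$, so $h$ can be fixed before $\xi$, matching the quantifier order of the statement, with only $n_0$ depending on $\xi$ through $p^{n_0}\xi^{-\alpha}\le 1$. Both arguments ultimately rest on the same choice of $\alpha,p,\varepsilon$ in (\ref{A2}); what your version buys is the elimination of the strong Markov property and of Lemma \ref{lem:A1} at this point, at the cost of introducing an unbounded Lyapunov function (for which one should note that the identity $\mbE{x}V(X^x_n)=U^nV(x)$ and the monotonicity of $U$ extend to nonnegative measurable functions, a routine remark). The only cosmetic quibble is that the final estimate is Markov's inequality rather than Chebyshev's.
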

\begin{proof}
Recall that $M$, $\alpha$, $\vare$ are the quantities given in the proof of existence of a stationary measure. The class $\mathcal P_{M,\alpha}$ is $P$-invariant. Take $h>0$ such that $Mh^\alpha<1/8$. The definition of $M$ yields the relation $M\varepsilon^\alpha\ge 1$ which implies clearly $P^n\delta_x\in\mathcal P_{M,\alpha}$ and thus $P^n\delta_x\big([h,1-h])\ge 3/4$ for every $x\in [\vare, 1-\vare]$ and $n\ge 0$.

Let $n_0$ be such that $ (\varepsilon/\xi)^\alpha p^{n}<1/8$ for $n\ge n_0$. Take $x\not\in[\vare,1-\vare]$ and $x\in[\xi, 1-\xi]$. Denote by $T$ the time of the first visit of $x$ in $[\vare, 1-\vare]$. Then by the strong Markov property, Lemma \ref{lem:A1} and the first part of the proof we have
$$\mbP{x}\big( \x{x}{n}\not\in [h,1-h]\big)$$
$$\le \sum_{k=1}^{n} \mbP{x}( \x{x}{n}\not\in [h,1-h] | T=k)\mbP{x}(T=k)+ \mbP{x}(T>n)<1/2,$$
for $n\ge n_0$, since  $\mbP{x}(T>n)\le (\varepsilon/x)^\alpha p^{n}\le (\varepsilon/\xi)^\alpha p^{n}<1/8$ for $x\in[\xi, 1-\xi]$.
\end{proof}

\noindent From now on, $h$ denotes the quantity given in Lemma \ref{lem:B1}.

%\noindent In what follows, $c$ stands for the point from Lemma \ref{lem:B2}.

\begin{lem}
\label{lem:B3}
For every $\rho>0$ there exists $\delta>0$  such that for every $\xi>0$ there exists a natural number $m$ such that for all $n\ge m$ we have
$$\inf_{x\in[\xi, 1-\xi]} \mbP{x}(\x{x}{n}\in (c-\rho, c+\rho))\ge \delta.$$
\end{lem}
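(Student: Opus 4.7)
The plan is to decompose the trajectory into two pieces: a first piece of some arbitrary length that drives the chain into the compact set $[h,1-h]$ with probability at least $1/2$ (via Lemma~\ref{lem:B1}), followed by a fixed tail of length $n_1$ that carries it from $[h,1-h]$ into the window $(c-\rho,c+\rho)$ with a uniform positive probability (via Lemma~\ref{lem:B2}). The strong Markov property glues the two pieces, and the crucial quantifier issue is that $\delta$ must depend only on $\rho$.

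First, I fix $\rho>0$ and let $h$ be the \emph{universal} constant from Lemma~\ref{lem:B1} (independent of $\xi$). Applying Lemma~\ref{lem:B2} with this $h$ and the given $\rho$ produces a natural number $n_1$ and a constant $\delta_0>0$ such that
$$\mbP{y}\!\left(\x{y}{n_1}\in(c-\rho,c+\rho)\right)\ge \delta_0 \qquad \text{for every } y\in[h,1-h].$$
Because $h$ was already fixed before $\xi$ appeared, $\delta_0$ depends only on $\rho$, and I can set $\delta:=\delta_0/2$.

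Next, given $\xi>0$, Lemma~\ref{lem:B1} yields some $n_0=n_0(\xi)$ with $\mbP{x}(\x{x}{k}\in[h,1-h])\ge 1/2$ for all $x\in[\xi,1-\xi]$ and all $k\ge n_0$. I would then put $m:=n_0+n_1$. For any $n\ge m$ and $x\in[\xi,1-\xi]$, conditioning on $\mathcal F_{n-n_1}$ and using the Markov property gives
$$\mbP{x}\!\left(\x{x}{n}\in(c-\rho,c+\rho)\right)=\mbE{x}\!\left[\mbP{\x{x}{n-n_1}}\!\left(\x{\cdot}{n_1}\in(c-\rho,c+\rho)\right)\right].$$
Since $n-n_1\ge n_0$, the event $\{\x{x}{n-n_1}\in[h,1-h]\}$ has probability at least $1/2$, and on that event the inner conditional probability is at least $\delta_0$ by the choice of $n_1$ and $\delta_0$. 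Therefore
$$\mbP{x}\!\left(\x{x}{n}\in(c-\rho,c+\rho)\right)\ge \delta_0 \cdot \mbP{x}\!\left(\x{x}{n-n_1}\in[h,1-h]\right)\ge \tfrac{\delta_0}{2}=\delta,$$
which is the desired lower bound.

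I do not expect a real obstacle here; the work is essentially bookkeeping. The one point that needs care is that $\delta$ must not depend on $\xi$, and this hinges on choosing $h$ from Lemma~\ref{lem:B1} \emph{before} applying Lemma~\ref{lem:B2}, so that $\delta_0$ is determined purely by $\rho$ (and the fixed $h$). All the $\xi$-dependence gets absorbed into the waiting time $n_0$, and hence into $m=n_0+n_1$, exactly as the statement of the lemma permits.
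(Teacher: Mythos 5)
Your proof is correct and follows essentially the same route as the paper: first wait $n_0(\xi)$ steps to land in $[h,1-h]$ with probability at least $1/2$ (Lemma~\ref{lem:B1}), then use the uniform bound of Lemma~\ref{lem:B2} over $[h,1-h]$ for the last $n_1$ steps, glued by the Markov property, giving $\delta=\delta_0/2$ depending only on $\rho$. Your handling of the quantifiers (fixing $h$ and hence $\delta_0$ before $\xi$ enters) is exactly the point the paper's proof relies on, and your write-up of the conditioning step is if anything slightly more careful than the paper's.
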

\begin{proof}
Fix $\xi$. Let $n_0$ be the number given in Lemma \ref{lem:B1}. Let $n_1, \zeta$ be the numbers given in Lemma \ref{lem:B2}. Let $m:=n_1+n_0$ and $\delta:=\zeta/2$. Take $n\ge m$. Then $n-n_1\ge n_0$, thus
$$ \mbP{x}(\x{x}{n}\in (c-\rho, c+\rho))= \mbP{x}\big(\x{x}{n}\in (c-\rho, c+\rho) | \x{x}{n-n_1}\in [h,1-h] \big)$$
$$\cdot\mbP{x}(\x{x}{n-n_1}\in [h,1-h] )\ge 1/2\cdot\zeta>0$$
for every $x\in[\xi,1-\xi]$ by Lemma \ref{lem:B2}.
\end{proof}

\noindent We are in position to show Theorem \ref{thm:C}. The idea is to apply the lower bound technique (cf. Theorem 4.1 in \cite{Lasota}).

\begin{proof}[Proof of Theorem 2]
Take $x<y$, $\lambda>0$ and a Lipschitz function $\varphi$. By the equicontinuity of $(U^n\varphi)$ at $c$ there exists $\rho>0$ such that
\begin{equation}
\label{B1}
|U^n\varphi(u)-U^n\varphi(v)|<\lambda \ \textrm{for $n\ge 1$ and $u,v\in (c-\rho, c+\rho)$.}
\end{equation}
Define
$$A_n:=\{ (\omega, \omega')\in\Sigma\times\Sigma : c-\rho< f^n_\omega(x)<f^n_{\omega'}(y) <c+\rho \}.$$ 
By Lemma \ref{lem:B2} there exist $m_1$, $\delta>0$ with $\mbP{x}\otimes\mbP{y}(A_{m_1})\ge \delta^2$. Put $\xi_1:=\min\{f^{m_1}_0(x), 1-f_1^{m_1}(y)\}$. Then $\xi_1\le f^{m_1}_\omega(x)<f^{m_1}_{\omega'}(y)\le 1-\xi_1$ for all $(\omega, \omega')\in\Sigma\times\Sigma$. Once again, by Lemma \ref{lem:B3} there exists $m_2$ such that $\mbP{x}\otimes\mbP{y}(A_{m_2})\ge \delta^2$. Put $\xi_2:=\min\{f^{m_1+m_2}_0(x), 1-f_1^{m_1+m_2}(y)\}$. Obviously, $\xi_2\le f^{m_1+m_2}_\omega(x)<f^{m_1+m_2}_{\omega'}(y)\le 1-\xi_2$ for all $(\omega, \omega')\in\Sigma\times\Sigma$.

We continue in this fashion to construct a sequence $m_1, m_2,\ldots$ such that
$$\mbP{x}\otimes\mbP{y}(A_{m_k})\ge \delta^2$$
for all $n$'s. It is easy to check that
\begin{equation}
\label{B2}
\mbP{x}\otimes\mbP{y}(B_n)\le (1-\delta^2)^n,
\end{equation}
where
$$B_n:=\bigcap_{k=1}^n \Sigma\times\Sigma\setminus A_{m_k}.$$
Hence for $n\ge m_k$ we get
$$U^n\varphi(x)-U^n\varphi(y)=\int\int_{\Sigma\times\Sigma}\big(\varphi(f^n_\omega(x))-\varphi(f^n_{\omega'}(y))\big)\mbP{x}(\textrm{d$\omega$})\otimes\mbP{y}(\textrm{d$\omega'$)}$$
$$=\sum_{j=1}^k \int\int_{A_{m_j}}\mbE{x,y}\big(\varphi(f^n_\omega(x))-\varphi(f^n_{\omega'}(y))| \mathcal F_{m_j}\big)\mbP{x}(\textrm{d$\omega$})\otimes\mbP{y}(\textrm{d$\omega'$)}$$
$$+\int\int_{B_k}\big(\varphi(f^n_\omega(x))-\varphi(f^n_{\omega'}(y))\big)\mbP{x}(\textrm{d$\omega$})\otimes\mbP{y}(\textrm{d$\omega'$)}$$
$$=\sum_{j=1}^k \int\int_{A_{m_j}}\big(U^{n-{m_j}}\varphi(f^{m_j}_\omega(x))-U^{n-{m_j}}\varphi(f^{m_j}_{\omega'}(y))\big)\mbP{x}(\textrm{d$\omega$})\otimes\mbP{y}(\textrm{d$\omega'$)}$$
$$+\int\int_{B_k}\big(\varphi(f^n_\omega(x))-\varphi(f^n_{\omega'}(y))\big)\mbP{x}(\textrm{d$\omega$})\otimes\mbP{y}(\textrm{d$\omega'$)}.$$
By (\ref{B1}), (\ref{B2}), and the definition of $A_{m_j}$'s eventually we have
$$|U^n\varphi(x)-U^n\varphi(y)|\le \lambda + (1-\delta^2)^k<2\lambda$$
provided that $k$ was sufficiently large. Therefore
$$\lim_{n\to\infty} |U^n\varphi(x)-U^n\varphi(y)|\to 0$$
for every $x,y\in (0,1)$. If $\mu_*$ is the stationary probability measure and $\nu\in \mathcal M_1$, then for any Lipschitz function $\varphi$ we obtain
$$\bigg|\int_{(0,1)}\varphi(x)P^n\nu(\textrm{d$x$})-\int_{(0,1)}\varphi(y)\mu_*(\textrm{d$y$})\bigg|
=\bigg|\int_{(0,1)}U^n\varphi(x)\nu(\textrm{d$x$})-\int_{(0,1)}U^n\varphi(y)\mu_*(\textrm{d$y$})\bigg|$$
$$\le \int\int_{(0,1)\times(0,1)}\big|U^n\varphi(x)-U^n\varphi(y)|\nu(\textrm{d$x$})\otimes\mu_*(\textrm{d$y$})\to 0$$
by the Lebesgue Convergence Theorem. Thus $P^n\nu\to\mu_*$ weakly-$\ast$ for every $\nu\in\mathcal M_1$ which is our assertion.
\end{proof}

%
%.................................................    THEOREM C- LAW OF LARGE NUMBERS
%
\section{The proof of Theorem 3} 

\noindent Let $c$ be the number given in Lemma \ref{lem:B2}. Recall that $c$ is the unique attractive fixed point of the composition $f_0\circ f_1$ on $(1-x_0, y_0)$.  For any $\rho>0$ we will write $S_\rho(x)$ for the time of the first visit of the process $(f^n_\omega(x))$ in $(c-\rho, c+\rho)$.

\begin{lem}
\label{lem:C1}
If $\rho>0$, $x\in (0,1)$, then $S_\rho(x)$ is finite $\mbP{x}$-a.s.
\end{lem}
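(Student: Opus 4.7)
The plan is to combine Lemma~\ref{lem:A1} (reaching the compact set $[\varepsilon,1-\varepsilon]$ in finite time from any starting point) with Lemma~\ref{lem:B3} (which, once inside that compact set, supplies a uniform lower bound $\delta>0$ on the probability of landing in $(c-\rho,c+\rho)$ after exactly $m$ steps), and then iterate these Bernoulli-type trials via the strong Markov property. Throughout, $\varepsilon<1-x_0$ is the quantity from the proof of existence, and $m,\delta$ are furnished by Lemma~\ref{lem:B3} with $\xi=\varepsilon$.

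First I would verify that for every $z\in(0,1)$ the hitting time $R(z):=\inf\{n\ge 0:\x{z}{n}\in[\varepsilon,1-\varepsilon]\}$ is $\mbP{z}$-almost surely finite. The key observation is that on $(0,\varepsilon)$ each $f_i$ acts as pure multiplication by $a_i$ (since $\varepsilon<1-x_0$), and $a_1\varepsilon<a_1(1-x_0)=1-y_0\le 1/2$, so $f_1((0,\varepsilon))\subseteq(0,1-\varepsilon)$; in particular the orbit cannot jump from $(0,\varepsilon)$ directly into $(1-\varepsilon,1)$. Hence $\{R(z)>n\}=B_{z,n}$ whenever $z<\varepsilon$, and Lemma~\ref{lem:A1} gives $\mbP{z}(R(z)>n)\le(\varepsilon/z)^\alpha p^n\to 0$. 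The symmetric half of Lemma~\ref{lem:A1} handles $z>1-\varepsilon$, and for $z\in[\varepsilon,1-\varepsilon]$ one has $R(z)=0$.

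Next I would set $\tau_1:=R(x)+m$ and inductively $\tau_{k+1}:=\tau_k+R(\x{x}{\tau_k})+m$. By the previous step and the strong Markov property each $\tau_k$ is almost surely finite, and since $\x{x}{\tau_k-m}\in[\varepsilon,1-\varepsilon]$, Lemma~\ref{lem:B3} gives
$$\mbP{x}\bigl(\x{x}{\tau_k}\in(c-\rho,c+\rho)\bigm|\mathcal{F}_{\tau_k-m}\bigr)\ge\delta.$$
Writing $E_k:=\{\x{x}{\tau_k}\in(c-\rho,c+\rho)\}$, an easy induction yields $\mbP{x}(E_1^c\cap\cdots\cap E_N^c)\le(1-\delta)^N$ for every $N$, so some $E_k$ occurs almost surely and $S_\rho(x)\le\tau_k<\infty$. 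The main obstacle is the first step: checking that the abstract skeleton events $B_{z,n}$, $B^{z,n}$ controlled by Lemma~\ref{lem:A1} actually coincide with the event $\{R(z)>n\}$ near the endpoints; this is precisely where the inequality $a_1\varepsilon<1-y_0\le 1/2$ is used to separate the two boundary regions. Everything after that is a routine geometric-trials argument with no further delicate estimates.
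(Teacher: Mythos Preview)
Your argument is correct and follows essentially the same route the paper indicates: the paper omits the proof, saying only that it is an easy consequence of Lemma~\ref{lem:A1} and Lemma~\ref{lem:B2}, and your geometric-trials scheme (hit $[\varepsilon,1-\varepsilon]$ using Lemma~\ref{lem:A1}, then attempt to land in $(c-\rho,c+\rho)$ with uniformly positive probability, then iterate via the strong Markov property) is precisely how one cashes this out. The only cosmetic difference is that you invoke Lemma~\ref{lem:B3} rather than Lemma~\ref{lem:B2} for the ``positive probability of reaching the target'' step; since Lemma~\ref{lem:B3} is itself an immediate corollary of Lemmas~\ref{lem:B1} and~\ref{lem:B2}, this is not a genuinely different approach. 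Your explicit check that $a_1\varepsilon<1-y_0\le 1/2<1-\varepsilon$, ensuring the orbit cannot jump from $(0,\varepsilon)$ directly to $(1-\varepsilon,1)$ and hence that $\{R(z)>n\}=B_{z,n}$, is exactly the point one needs to make Lemma~\ref{lem:A1} applicable and is a detail the paper leaves implicit.
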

\noindent We omit the proof, since it is an easy consequence of Lemma \ref{lem:A1} and Lemma \ref{lem:B2}.

\begin{lem}
\label{lem:C2}
Let $q<1$, $L\ge 1$, $\eta_2>0$, be the quantities given in Proposition \ref{prop:A3}. Let $x,y\in[1-x_0, x_0]$ be such that $|x-y|<\eta_2$. If $q<r<1$ then for every $\lambda>0$ there exist a natural $n_\lambda$ and a measurable set $\widetilde{\Sigma}\subseteq\Sigma$ such that $\mbP{y}(\widetilde{\Sigma})>1-\lambda$ and
$$ |f^n_\omega(x)-f^n_\omega(y)|< r^n $$
for every $\omega\in\widetilde{\Sigma}$ and $n\ge n_\lambda$.
\end{lem}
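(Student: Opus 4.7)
The plan is to deduce the lemma from Proposition \ref{prop:A4} via Markov's inequality and the first Borel--Cantelli lemma. Proposition \ref{prop:A4} (applied with the roles of $x$ and $y$ exchanged, which is legitimate since the bound and the random variable $|\x{x}{n}-\x{y}{n}|$ are both symmetric in $x,y$) gives
$$\mbE{y}|\x{x}{n}-\x{y}{n}|\le Lq^n|x-y|$$
for every $n$. Markov's inequality then yields
$$\mbP{y}\bigl(|\x{x}{n}-\x{y}{n}|\ge r^n\bigr)\le L|x-y|\bigl(q/r\bigr)^n,$$
and since $q/r<1$ the right-hand side is summable in $n$.

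By the first Borel--Cantelli lemma, the event $E:=\{\omega\in\Sigma:|\x{x}{n}-\x{y}{n}|\ge r^n \text{ for infinitely many } n\}$ satisfies $\mbP{y}(E)=0$. For each positive integer $N$ set
$$E_N:=\bigl\{\omega\in\Sigma: |\x{x}{n}-\x{y}{n}|<r^n \text{ for every } n\ge N\bigr\}.$$
The sequence $(E_N)$ is increasing with union $\Sigma\setminus E$, so $\mbP{y}(E_N)\uparrow 1$. Therefore one can pick $n_\lambda$ so large that $\mbP{y}(E_{n_\lambda})>1-\lambda$, and then $\widetilde{\Sigma}:=E_{n_\lambda}$ is the set required by the statement of the lemma.

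No real obstacle is expected here: once the $L^1$ contraction estimate of Proposition \ref{prop:A4} is available, upgrading it from an expectation bound to a pathwise bound with arbitrarily high probability is a textbook Borel--Cantelli argument. The only point that deserves a brief comment is the mild asymmetry between the $\mbE{x}$ appearing in Proposition \ref{prop:A4} and the $\mbP{y}$ required by the present lemma; this is handled either by the symmetry remark above or, equivalently, by noting that the coupling argument underlying the proof of Proposition \ref{prop:A4} is unchanged if one interchanges $x$ and $y$ throughout.
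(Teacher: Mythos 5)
Your proposal is correct and is essentially identical to the paper's own proof, which likewise combines the $L^1$ estimate of Proposition~\ref{prop:A4} with the Chebyshev (Markov) inequality and the first Borel--Cantelli lemma; your extra care about the $\mbE{x}$ versus $\mbP{y}$ asymmetry and the explicit construction of $\widetilde{\Sigma}$ via the increasing sets $E_N$ only spells out details the paper leaves implicit.
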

\begin{proof}
This is an immediate consequence of the Chebyshev inequality and the Borel-Cantelli lemma. Indeed,
$$\mbP{y}( \{\omega\in\Sigma : |f^n_\omega(x)-f^n_\omega(y)|\ge r^n\} )\le \mbE{y}|f^n_\omega(x)-f^n_\omega(y)| r^{-n}\le L(q/r)^n,$$
therefore $\{|f^n_\omega(x)-f^n_\omega(y)|\ge r^n\}$ occurs only finitely many times $\mbP{x}$-a.s which completes the proof.
\end{proof}

\noindent The following lemma is proven in \cite{Elton}, Lemma 3. For the convenience of the reader we rewrite the proof here.

\begin{lem}
\label{lem:C3}
There exists $\rho>0$ such that for every $x,y\in (c-\rho, c+\rho)$ the measures $\mbP{x}$, $\mbP{y}$ on $\Sigma$ are absolutely continuous.
\end{lem}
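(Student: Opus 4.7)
The plan is to build the Radon--Nikodym derivative $d\mbP{x}/d\mbP{y}$ as the $\mbP{y}$-almost sure limit of the finite-horizon likelihood ratios and verify that it is a bona fide density.

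First I would pick $\rho>0$ small enough that $(c-\rho,c+\rho)\subset (1-x_0,x_0)$ and $2\rho<\eta_2$, so that Propositions \ref{prop:A3}, \ref{prop:A4} and Lemma \ref{lem:C2} all apply to any pair $x,y\in (c-\rho,c+\rho)$. From the definitions of $\mbP{x},\mbP{y}$ on cylinders, the sequence
$$F_n(\omega):=\prod_{k=1}^{n}\frac{p_{\omega_k}(\xo{x}{k-1})}{p_{\omega_k}(\xo{y}{k-1})}$$
coincides with $d\mbP{x}|_{\mathcal F_n}/d\mbP{y}|_{\mathcal F_n}$, so $(F_n)$ is a positive $\mbP{y}$-martingale of mean $1$. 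By (A3) there is $\vare_0>0$ with $p_i(z)\ge\vare_0$ everywhere, and the elementary bound $|\log a-\log b|\le|a-b|/\min(a,b)$ yields
$$|\log F_n|\le\vare_0^{-1}\sum_{k=1}^{n}\beta(|\xo{x}{k-1}-\xo{y}{k-1}|).$$
Passing to a concave majorant $\beta^*$ of $\beta$ (as in the uniqueness proof), Jensen together with Proposition \ref{prop:A4} gives $\mbE{y}\beta^*(|\xo{x}{k-1}-\xo{y}{k-1}|)\le\beta^*(Lq^{k-1}|x-y|)$, and Dini continuity (A2) makes the sum in $k$ finite. Consequently $\log F_n$ converges absolutely $\mbP{y}$-a.s.\ and $F_\infty:=\lim_n F_n$ takes values in $(0,\infty)$.

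The main obstacle is to upgrade this a.s.\ convergence to convergence in $L^1(\mbP{y})$, equivalently $\mbE{y}F_\infty=1$, which is exactly absolute continuity $\mbP{x}\ll\mbP{y}$; Fatou alone gives only $\mbE{y}F_\infty\le 1$. I would attempt $L^2$-boundedness of $(F_n)$. A direct two-point conditional calculation produces
$$\mbE{y}\Big[\big(p_{\omega_k}(\xo{x}{k-1})/p_{\omega_k}(\xo{y}{k-1})\big)^{2}\,\Big|\,\mathcal F_{k-1}\Big]=1+\chi_k,\qquad \chi_k\le C\,\beta(|\xo{x}{k-1}-\xo{y}{k-1}|)^2,$$
so that $F_n^2\prod_{k=1}^n(1+\chi_k)^{-1}$ is a positive $\mbP{y}$-martingale of mean $1$. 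The hard step is then the control of $\prod_k(1+\chi_k)$: combining the $\mbP{y}$-a.s.\ tail bound $|\xo{x}{k}-\xo{y}{k}|<r^k$ coming from Lemma \ref{lem:C2} (on which $\sum_k\beta^2$ is dominated by a convergent geometric series) with the geometric tail for the first index beyond which this inequality holds, and choosing $\rho$ small and $r$ close enough to $1$ that the resulting exponential moments are finite, one obtains $\sup_n\mbE{y}F_n^2<\infty$. Uniform $L^2$-boundedness together with a.s.\ convergence yields $L^1(\mbP{y})$-convergence by Scheff\'e's lemma, so $\mbE{y}F_\infty=1$ and $F_\infty=d\mbP{x}/d\mbP{y}$. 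Swapping $x$ and $y$ yields the other direction.
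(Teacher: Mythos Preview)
Your likelihood-ratio martingale approach is a genuine alternative to the paper's. The paper (following Elton) never builds the density: it fixes $\lambda>0$, isolates a good set $Q=\{d_k<r^k\ \text{for all}\ k\ge m\}$ of $\mbP{y}$-probability $>1-\lambda$, observes that on any cylinder meeting $Q$ the ratio $p_{i_1,\ldots,i_n}(y)/p_{i_1,\ldots,i_n}(x)$ is bounded by a fixed constant depending only on $m$, and then covers an arbitrary $\mbP{x}$-null set by cylinders of tiny total $\mbP{x}$-mass. Your route is more structural and produces the Radon--Nikodym derivative explicitly, but the price is that you must prove uniform integrability of $(F_n)$, and that step in your sketch has a gap.

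Concretely, the martingale identity $\mbE{y}\bigl[F_n^2/\Pi_n\bigr]=1$ with $\Pi_n=\prod_{k\le n}(1+\chi_k)$ does \emph{not} give $\sup_n\mbE{y}F_n^2<\infty$ from moment control of $\Pi_\infty$ alone: $\Pi_n$ is random and correlated with the mean-one martingale $F_n^2/\Pi_n$, so one cannot split $\mbE{y}[\Pi_n\cdot(F_n^2/\Pi_n)]$, and H\"older would require bounds on moments of $F_n^2/\Pi_n$ that you do not have. The repair is to drop the $\chi^2$-martingale and dominate $F_n$ pointwise. From your own estimate $|\log F_n|\le\vare_0^{-1}\sum_{k\le n}\beta(d_{k-1})$, Proposition~\ref{prop:A3} (which gives the deterministic bound $d_{k-1}\le a_1|x-y|<2a_1\rho$ for every $k$), and the split on the event $\{d_{k-1}<r^{k-1}\}$, one obtains $F_n\le C_1\exp(c_\rho N)$ uniformly in $n$, where $N=1+\sup\{k:d_{k-1}\ge r^{k-1}\}$ and $c_\rho=\vare_0^{-1}\beta(2a_1\rho)$. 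The Chebyshev bound in the proof of Lemma~\ref{lem:C2} gives $\mbP{y}(N>m)\le L'(q/r)^m$, hence $\mbE{y}e^{sN}<\infty$ whenever $e^s<r/q$; choosing first $r\in(q,1)$ and then $\rho$ small enough that $2c_\rho<\log(r/q)$ yields $\sup_n F_n\in L^2(\mbP{y})$ directly, hence uniform integrability and $\mbE{y}F_\infty=1$. (A minor point: the passage from $L^2$-boundedness plus a.s.\ convergence to $L^1$-convergence is Vitali's theorem, not Scheff\'e's.)
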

\begin{proof}
Put $\rho:=\eta_2/2$. We have $\sum_{k=1}^\infty \beta(r^k)<\infty$, since $p_0, p_1$ are Dini continuous (let us recall that $\beta$ stands for the modulus of continuity of $p_0$ and $p_1$. Take $\delta$ such that $\delta<p_0(z)<1-\delta$ for every $z\in (0,1)$, by assumption (A3).

Fix $x,y\in(c-\rho,c+\rho)$ and a measurable set $E$ with $\mbP{x}(E)=0$. Take $\lambda>0$ and $q<r<1$, where $q$ is given in Lemma \ref{prop:A4}. We will show that $\mbP{y}(E)<2\lambda$. Let $\widetilde{\Sigma}$ and $n_\lambda$ be given in Lemma \ref{lem:C2}. Let $m\ge n_\lambda$ be such that $\sum_{k=m+1}^\infty \beta(r^k)<\lambda/2$.

Put $\Sigma_*:=\bigcup_{k=1}^\infty \{0,1\}^k$ and let $\Xi \subseteq \Sigma_*$ be a countable set such that $E\subseteq \bigcup_{\textbf{i}} C_{\textbf i\in\Xi}$ and $\mbP{x}(\bigcup_{\textbf{i}\in\Xi} C_{\textbf i})<(\lambda/2)(\delta/(1-\delta)^m$, where $C_\textbf{i}$ denotes the cylinder set in $\Sigma$ corresponding to the finite sequence $\textbf i\in \Xi$. Moreover, we assume $C_\textbf{i}$ to be pairwise disjoint for $\textbf{i}\in\Xi$. Let $Q_n:=\{(i_1, i_2,\ldots ) \in\Sigma : \textrm{$|f^k_\omega(x)-f^k_\omega(y)|< r^k$ for $m\le k \le n$}\}$ for $n\ge m$ and $Q_n:=\Sigma$ for $n<m$, $Q:=\bigcap_{n=1}^\infty Q_n$. By Lemma \ref{lem:C2} we have $\mbP{y}(\Sigma\setminus Q)\le \mbP{y}(\Sigma\setminus \widetilde{\Sigma})<\lambda$. Take $n\ge m$, $(i_1, i_2, \ldots)\in Q_n$. We obtain the following estimation
$$p_{i_1,\ldots,i_n}(y)\le p_{i_1,\ldots,i_n}(x)\bigg(\frac{1-\delta}{\delta}\bigg)^m \prod_{k=m+1}^n\bigg(1+\frac{|p_{i_1,\ldots,i_k}(y)-p_{i_1,\ldots,i_k}(x)|}{p_{i_1,\ldots,i_k}(x)}\bigg)$$
$$\le  p_{i_1,\ldots,i_n}(x)\bigg(\frac{1-\delta}{\delta}\bigg)^m \prod_{k=m+1}^n\bigg(1+\frac{\beta(r^k)}{\delta}\bigg).$$
One can show easy by induction the following claim: if $r_1, r_2, \ldots$ are positive numbers such that $\sum_{k=1}^\infty r_k<1/2$, then $(1+r_1)\cdot\ldots\cdot(1+r_k)\le 1+2(r_1+\ldots+r_k)$. Application of this claim yields
$$\prod_{k=m+1}^n\bigg(1+\frac{\beta(r^k)}{\delta}\bigg)\le 1+2\sum_{k=m+1}^\infty \frac{\beta(r^k)}{\delta}\le 2,$$
and thus
$$p_{i_1,\ldots,i_n}(y)\le  2\bigg(\frac{1-\delta}{\delta}\bigg)^m p_{i_1,\ldots,i_n}(x)$$
for $n\ge m$. If $n<m$ then this holds trivially for any $\omega\in\Sigma$.

Take $\textbf i =(i_1,\ldots, i_l) \in \Xi$. If $Q$ and $C_\textbf{i}$ are not disjoint then also $Q_l$ and $C_\textbf{i}$ are not disjoint, hence we have be the above estimation
$$\mbP{y}(Q\cap C_\textbf{i})\le \mbP{y}(Q_l\cap  C_\textbf{i})=p_{i_1,\ldots,i_l}(y)$$
$$\le  2\bigg(\frac{1-\delta}{\delta}\bigg)^m p_{i_1,\ldots,i_l}(x)= 2\bigg(\frac{1-\delta}{\delta}\bigg)^m \mbP{x}(C_\textbf{i}).$$
Moreover,
$$\mbP{y}\bigg((\Sigma\setminus Q) \cap \bigcup_{\textbf{i}\in\Xi} C_{\textbf i}\bigg)<\lambda.$$
Recall here that the cylinders $C_\textbf{i}$, $\textbf i \in\Xi$ are disjoint. Combining that with two above inequalities yields
$$\mbP{y}(E)\le \mbP{y}\bigg(\bigcup_{\textbf{i}\in\Xi} C_{\textbf i} \bigg) = \mbP{y}\bigg((\Sigma\setminus Q) \cap\bigcup_{\textbf{i}\in\Xi} C_{\textbf i} \bigg)+\mbP{y}\bigg( Q \cap\bigcup_{\textbf{i}\in\Xi} C_{\textbf i} \bigg)$$
$$<\lambda+\sum_{\textbf i \in \Xi}\mbP{y}(Q\cap C_{\textbf i})\le \lambda+\sum_{\textbf i \in \Xi}2\bigg(\frac{1-\delta}{\delta}\bigg)^m \mbP{x}(C_\textbf{i})$$
$$=\lambda+2\bigg(\frac{1-\delta}{\delta}\bigg)^m \mbP{x}\bigg(\bigcup_{\textbf{i}\in\Xi} C_{\textbf i} \bigg)<2\lambda$$
which is the desired assertion.
\end{proof}

\begin{proof}[Proof of Theorem 3]
Let $\varphi$ be any Lipschitz function. The statement for any continuous function follows from the density of the set of Lipschitz functions in $C\big((0,1)\big)$ with the supremum norm. Let $\rho$ be given in Lemma \ref{lem:C3}. There exists $y\in (c-\rho, c+\rho)$ such that
$$\frac{\varphi(y)+\ldots+\varphi(f_\omega^{n-1}(y))}{n}\xrightarrow[n\to\infty]{} \int \varphi \textrm{d$\mu_*$}$$
for $\mbP{z}$-a.e. $\omega\in\Sigma$ where $\mu_*$ is the unique $P$-invariant measure. It follows by the fact that $c$ is in the support of $\mu_*$ (see the beginning of the proof of uniqueness) and by the Birkhoff Ergodic Theorem. Take any $z\in (c-\rho, c+\rho)$. For $\mbP{z}$-a.e. $\omega\in\Sigma$ there exists $n(\omega)$ such that $|f^n_\omega(z)-f^n_\omega(y)|\le r^n$ for $n\ge n(\omega)$, where $q<r<1$ by Lemma \ref{lem:C2}. Therefore
$$\bigg|\frac{\varphi(y)+\ldots+\varphi(f_\omega^{n-1}(y))}{n}-\frac{\varphi(z)+\ldots+\varphi(f_\omega^{n-1}(z))}{n}\bigg|\xrightarrow[n\to\infty]{} 0$$
for $\mbP{z}$-a.e. $\omega\in\Sigma$. By Lemma \ref{lem:C3} the measures $\mbP{z}$, $\mbP{y}$ on $\Sigma$ are absolutely continuous. Hence
$$\frac{\varphi(z)+\ldots+\varphi(f_\omega^{n-1}(z))}{n}\xrightarrow[n\to\infty]{} \int \varphi \textrm{d$\mu_*$}$$
for $\omega\in D_z$, where $D_z\subseteq \Sigma$ is certain measurable set with $\mbP{z}(D_z)=1$.

To complete the proof fix any $x\in (0,1)$ and observe that by Lemma \ref{lem:C1} one can find a set $\Xi\subseteq\Sigma_*=\bigcup_{k=1}^\infty \{0,1\}^k$ such that $f_{i_l}\circ\ldots\circ f_{i_1}(x)\in (c-\rho, c+\rho)$ for $\textbf i=(i_1,\ldots, i_l)\in\Xi$, $\mbP{x}\bigg(\bigcup_{\textbf i \in \Xi} C_\textbf{i}\bigg)=1$ and, moreover, the family of cylinder sets $C_\textbf{i}$, $\textbf i \in\Xi$ is disjoint. Then for
$$C:=\bigcup_{(i_1,\ldots, i_l)=\textbf i \in \Xi} (i_1,\ldots, i_l)\times D_{f_{i_l}\circ\ldots\circ f_{i_1}(x)}$$
we have $\mbP{x}(C)=1$ and 
$$\frac{\varphi(x)+\ldots+\varphi(f_\omega^{n-1}(x))}{n}\xrightarrow[n\to\infty]{} \int \varphi \textrm{d$\mu_*$}$$
for every $\omega\in C$.
\end{proof}

\section{Open problems}
One cannot hope to prove uniqueness of a stationary measure with continuous probabilities without any additional assumptions on them. Indeed, in \cite{Stenflo} it is proved that for two affine contractions of the interval there exists continuous probability functions $p_1, p_2$ such that there is no uniqueness of a stationary measure for the corresponding Markov chain. It is reasonable to assume (A2) from two reasons. The first one is that it is exactly what we need to deduce from Proposition \ref{prop:A4} the equicontinuity of the family $(U^n)$ at some point of the interval $[1-x_0, x_0]$. The second is that this assumption appears also in papers \cite{BDEG} and \cite{Lasota}, hence it seems to be natural.

The assumption (A3) is not very restrictive and without that the situation is more complicated. For example, admission of vanishing probabilities may easily create invariant intervals, i.e. disjoint intervals such that probability of getting from one to the another is zero. We used this assumption for example in the proof of Lemma \ref{lem:B2}.

The assumption (A4) was crucial to ensure the existence of a stationary measure and to show Lemma \ref{lem:A1} which was a key ingredient in the proof of Proposition \ref{prop:A4}. Without it, different situations may happen. For example, $X^x_n\to 0$ a.s., if $\Lambda_0<0$ and $\Lambda_1\ge 0$ (see \cite{Homburg}).

The assumption (A1) was also important in our reasoning. Essentially it was used only in the proof of Proposition \ref{prop:A3}, but we are not able to show Proposition \ref{prop:A4} without Proposition \ref{prop:A3}. Proposition \ref{prop:A3} is generally not true in the case of all Alsed\`a-Misiurewicz systems, so the following question is natural:

1. It is not possible (in general) to show Proposition \ref{prop:A3} without (A1). However, is it possible to show Proposition \ref{prop:A4} without this assumption? If not, then it is possible to show uniqueness of a stationary measure?

Our method of proving Theorem \ref{thm:C} does not provide any rate of convergence of $U^n\varphi$ to $\int \varphi \textrm{d$\mu$}$. However, if one assume that probabilities are Lipschitz continuous then we may expect that rate of convergence is exponential (see the main result in \cite{Sleczka}), therefore sufficiently fast to provide the Central Limit Theorem (see \cite{Gordin_Lifsic}, \cite{Maxwell-Woodroofe}).

2. Does the Central Limit Theorem hold for our Markov chains provided that probabilities are Lipschitz continuous?

The last question is connected with paper \cite{Baranski-Spiewak}. The authors prove there that under some assumptions, if $y_0<1/2$ then there are invariant Cantor sets for the iterated function system $(f_0, f_1)$. However, nothing is known in the case $y_0>1/2$ which is our case.

3. If the Alsed\`a-Misiurewicz system satisfies (A1) then is it necessarily minimal? 

\section{Acknowledgements}
We are grateful to Tomasz Szarek for communicating the problem and reading the manuscript.

%................................................BIBLIOGRAPHY

\bibliographystyle{plain}
\bibliography{Bibliography}

\end{document}